\numberwithin{equation}{section}
\newtheorem{thm}{Theorem}[section]
\newtheorem{lem}[thm]{Lemma}
\newtheorem{cor}[thm]{Corollary}
\newtheorem{prop}[thm]{Proposition}
\theoremstyle{definition}
\newtheorem{eg}[thm]{Example}
\newtheorem{ques}[thm]{Question}
\begin{document}
\title[Universal commutative operator algebras]{Universal commutative operator algebras and transfer function realizations of polynomials}
\author{Michael T. Jury}

\address{Department of Mathematics\\
  University of Florida\\
  Box 118105\\
  Gainesville, FL 32611-8105\\
  USA}

\email{mjury@ufl.edu}
\thanks{Research supported by NSF grant DMS 0701268.}

\date{\today}

\begin{abstract}To each finite-dimensional operator space $E$ is associated a commutative operator algebra $UC(E)$, so that $E$ embeds completely isometrically in $UC(E)$ and any completely contractive map from $E$ to bounded operators on Hilbert space extends uniquely to a completely contractive homomorphism out of $UC(E)$.  The unit ball of $UC(E)$ is characterized by a Nevanlinna factorization and transfer function realization.  Examples related to multivariable von Neumann inequalities are discussed.
\end{abstract}
\maketitle
%\tableofcontents
\section{Introduction}
Consider the algebra $\mathcal P_n=\mathbb C[z_1,\dots z_n]$ of
polynomials in $n$ variables with complex coefficients.  If $T=(T_1,
\dots T_n)$ is an $n$-tuple of bounded, commuting operators on a
Hilbert space $H$, then we can define a seminorm $\|\cdot \|_T$ on
$\mathcal P_n$ in the obvious way:
\begin{equation*}
\|p\|_T :=\|p(T)\|.
\end{equation*}
If now $\mathcal T$ is a collection of such $n$-tuples which is
\emph{separating} for $\mathcal P_n$, that is, $p(T)=0$ for all $T\in
\mathcal T$ if and only if $p=0$, then the supremum
\begin{equation}\label{E:Tnorm}
\|p\|:=\sup_{T\in\mathcal T} \|p(T)\|.
\end{equation}
defines a norm on $\mathcal P_n$, and the closure of $\mathcal P_n$
with respect to this norm is a Banach algebra.  Moreover, if $p$ is an
$m\times m$ matrix of polynomials (equivalently, a polynomial with
$m\times m$ matrix coefficients), we can similarly define
\begin{equation}\label{E:Tnorm_matrices}
\|p\|_n:=\sup_{T\in\mathcal T} \|p(T)\|.
\end{equation}
Explicitly, if we write $p$ in multi-index notation as $p(z)=\sum_{\bf
  n} A_{\bf n} z^{\bf n}$ with $A_{\bf n}\in M_{m\times
  m}(\mathbb C)$, then $\|p(T)\|$ denotes the norm of the operator
\begin{equation*}
\sum_{\bf n} A_{\bf n} \otimes T^{\bf n}
\end{equation*}
acting on $\mathbb C^m \otimes H$, where $T^{\bf n}$ has the obvious
meaning.  The completion of $\mathcal P_n$ in the norm (\ref{E:Tnorm})
(together with the system of matrix norms $\|\cdot \|_n$) thus
becomes an {\em operator algebra}.  While not every Banach algebra
norm on $\mathcal P_n$ can be obtained in this way, a number of norms
of this type arise naturally and have been extensively studied.  It will help to consider some examples.  

1)  Fix a nice domain $\Omega\subset \mathbb C^n$ (say, the unit ball)
and let $T$ range over the commuting normal operators with joint
spectrum in $\partial \Omega$.  Then by the spectral theorem (and the
maximum principle) the norm $\|p\|_{\mathcal T}$ is equal to the
supremum norm $\|p\|_\infty =\sup_{z\in\Omega}|p(z)|$.

2)  Another important example is the {\em universal} or {\em Agler norm}
$\|\cdot \|_u$.  Here $T$ ranges over all commuting $n$-tuples of
contractive operators on Hilbert space.  It is known that
$\|\cdot\|_u$ is equal to the supremum norm over the unit circle
$\mathbb T$ when $n=1$ (this follows from von Neumann's inequality),
and equal to the supremum norm over the $2$-torus $\mathbb T^2$ when
$n=2$ (Ando's inequality), but the analogous statements are false for
all $n\geq 3$; a counterexample was first given by Kaijser and
Varopoulos \cite{var}.

3)  Yet another well-known example comes from the \emph{row contractions};
these are the commuting $n$-tuples for which
\begin{equation*}
I-\sum_{j=1}^n T_j T_j^* \geq 0.
\end{equation*}
It is remarkable in this case that the supremum (\ref{E:Tnorm}) is
always attained on a single distinguished row contraction, namely the
\emph{$n$-shift} $S_1, \dots S_n$ where $S_j$ is the operator of
multiplication by the coordinate function $z_j$ on a certain Hilbert
space of holomorphic functions on the unit ball of $\mathbb C^n$.  The
resulting norm on a polynomial $p$ is the norm $p$ inherits by acting
as a multiplication operator on this space, called the {\em
  Drury-Arveson space} \cite{drury,arv,btv}, which is the reproducing
kernel Hilbert space on the unit ball of $\mathbb C^n$ with kernel
$k(z,w)=(1-\sum_j z_j\overline{w_j})^{-1}$.  It is known that this
norm is generically strictly greater than the supremum norm over the
ball, and in fact the two are inequivalent \cite{arv}.  (In the
previous example, it is not known whether the universal norm is
equivalent to the supremum norm over $\mathbb T^n$ when $n\geq 3$.)
\vskip.1in
Following Ambrozie and Timotin \cite{at} and Ball and Bolotnikov
\cite{bb} (and the more general approach of Mittal and Paulsen
\cite{mittal-paulsen}) the last two examples may be unified in the following way: consider
the $n\times n$-matrix valued function
\begin{equation*}
Q(z_1, \dots z_n) =\text{diag}(z_1, \dots z_n).
\end{equation*}
Then the operators $T_1, \dots T_n$ are all contractive if and only if
\begin{equation}
I-Q(T)Q(T)^* \geq 0.
\end{equation}
Row contractions are similarly characterized by the positivity of
$I-Q(T)Q(T)^*$ for the $1\times n$-matrix valued function
\begin{equation*}
Q(z)=(z_1, \dots z_n).
\end{equation*}
In general, then, fix an analytic $N\times M$ matrix-valued polynomial
$Q$ in $n$ variables and consider the domain
\begin{equation*}
\mathcal D_Q =\{ z\in \mathbb C^n : \|Q(z)\|<1\}.
\end{equation*}
(Examples (2) and (3) above give the unit polydisk $\mathbb D^n$ and the unit
ball $\mathbb B^n$ respectively.)  Now consider the class of commuting
operator $n$-tuples
\begin{equation*}
\mathcal T_Q =\{ T=(T_1,\dots T_n) : I-Q(T)Q(T)^*\geq 0\}.
\end{equation*}
This class of operators may be used to define an operator algebra norm
on the space of polynomials as in (\ref{E:Tnorm}).  Say a polynomial
lies in the \emph{Schur-Agler class} $\mathcal{SA_Q}$ if $\|p(T)\|\leq
1$ for all $T$ such that $I-Q(T)Q(T)^*\geq0$.  (It is possible for
different polynomials $Q$ to determine the same domain $\mathcal D_Q$
but distinct Schur-Agler classes $\mathcal{SA}_Q$; one of the
motivations of the present paper is to investigate these differences
in the case of linear $Q$.)  Among the main results of \cite{at} and
\cite{bb} is that the classes $\mathcal{SA}_Q$ are characterized by a
``Nevanlinna factorization,'' so named because it may be read as a
kind of generalization of a 1919 theorem of R. Nevanlinna.  This
theorem says that a function $f$ in the unit disk $\mathbb
D\subset\mathbb C$ is holomorphic and bounded by $1$ if and only if
the kernel $(1-f(z)\overline{f(w)})(1-z\overline{w})^{-1}$ is positive
semidefinite.  Equivalently, there exists a Hilbert space $H$ and a
holomorphic function $F:\mathbb D\to H$ such that
\begin{equation}\label{E:pickmatrix1}
\frac{1-f(z)\overline{f(w)}}{1-z\overline{w}}=F(z)F(w)^*
\end{equation}
which it will be helpful to rewrite as
\begin{equation}\label{E:pickmatrix2}
1-f(z)\overline{f(w)}=F(z)(1-z\overline{w})F(w)^*.
\end{equation}
A version of this theorem in the bidisk $\mathbb D^2$ was obtained by
Agler \cite{agmcc}, who showed that $f:\mathbb D^2\to \mathbb C$ is
holomorphic and bounded by $1$ if and only if there exists Hilbert
space $H$ and holomorphic functions $F_1,F_2:\mathbb D^2\to H$ such
that
\begin{equation}\label{E:aglerpick1}
1-f(z)\overline{f(w)} =F_1(z)(1-z_1\overline{w_1})F_1(w)^* + F_2(z)(1-z_2\overline{w_2})F_2(w)^*
\end{equation}
If we put $F=[F_1\ F_2]$ and define
\begin{equation}\label{E:aglerQdef}
Q(z)=\begin{pmatrix}z_1 & 0 \\ 0 & z_2\end{pmatrix}
\end{equation}
then (\ref{E:aglerpick1}) takes a form more reminiscent of
(\ref{E:pickmatrix2}):
\begin{equation}\label{E:aglerpick2}
1-f(z)\overline{f(w)} = F(z) \left[ I_H \otimes (I_2 - Q(z)Q(w)^*)\right]F(w)^*
\end{equation}                                               
In general, we have the following, which is special case of
\cite[Theorem 1.5]{bb}.  (We state the theorem only in the case of
polynomials, since it is really the operator algebra norm induced by
the operators $T$ that is of interest in the present paper.)

\begin{thm}\label{T:bb}
Let $Q$ and $\mathcal D_Q$ be as above, and let $p$ be a matrix-valued
analytic polynomial.  Then the following are equivalent:
\begin{itemize}
\item[{\bf 1)}]  {\textbf{\emph{Agler-Nevanlinna factorization.}}}  There exists a Hilbert space $K$, and an analytic function $F:\mathcal D_Q \to B(K,\mathbb C^N)$ such that
\begin{equation}
1-p(z)p(w)^* = F(z)\left[ I_K\otimes(I -Q(z)Q(w)^* )\right]F(w)^*
\end{equation}

\item[{\bf 2)}]  {\textbf{\emph{Transfer function realization.}}}  There exists a Hilbert space $K^\prime$, a unitary transformation $U: K^\prime\oplus \mathbb C^N \to K^\prime\oplus \mathbb C^N$ of the form
\begin{equation}
\begin{array}{cc}
		\	& 	\begin{array}{cc}
		 		K^\prime & \mathbb C^N
		 		\end{array}\\
		\begin{array}{c}
			K^\prime \\
			\mathbb C^N
		\end{array}  & 	\left( \begin{array}{ll}
						A & B \\
						C & D
				       \end{array} \right)
	\end{array}
\end{equation}
such that
\begin{equation}
p(z) = D + C(I-Q(z)A)^{-1}Q(z)B.
\end{equation}
\item[{\bf 3)}]  {\textbf{\emph{von Neumann inequality.}}}  $p$ lies in the (matrix-valued) Schur-Agler class $\mathcal{SA}_Q$, that is, for every commuting $n$-tuple $T$ such that $I-Q(T)Q(T)^*\geq 0$,
\begin{equation}
{\|p(T)\|}_{M_N\otimes B(K)} \leq 1.
\end{equation}
\end{itemize}
\end{thm}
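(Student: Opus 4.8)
The plan is to prove the cycle (1) $\Rightarrow$ (2) $\Rightarrow$ (3) $\Rightarrow$ (1); the two implications between the realization-type statements are constructive, and the real work lies in recovering the factorization from the von Neumann inequality. For (1) $\Rightarrow$ (2) I would run a \emph{lurking isometry} argument. Rearranging the Agler-Nevanlinna factorization gives
\[
1 + F(z)\bigl[I_K\otimes Q(z)Q(w)^*\bigr]F(w)^* \;=\; p(z)p(w)^* + F(z)F(w)^*,
\]
and both sides may be read as Gram matrices of two families of vectors indexed by the points $w\in\mathcal D_Q$. Because the two Gram matrices agree, there is an isometry $V$ carrying the first family onto the second; extending $V$ to a unitary $U=\begin{pmatrix}A&B\\C&D\end{pmatrix}$ on a possibly enlarged space $K'\oplus\mathbb C^N$ and solving the resulting linear relation for the dependence on $z$ produces the Neumann series for $(I-Q(z)A)^{-1}$, hence the transfer function formula $p(z)=D+C(I-Q(z)A)^{-1}Q(z)B$.

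For (2) $\Rightarrow$ (3) I would substitute a commuting tuple $T$ with $I-Q(T)Q(T)^*\ge 0$ directly into the realization; this is the operator-theoretic form of the statement that a unitary colligation has a contractive transfer function. Expanding $I-p(T)p(T)^*$ and invoking the four block relations that express $U^*U=UU^*=I$, together with $\|Q(T)\|\le 1$, yields $I-p(T)p(T)^*\ge 0$, i.e.\ $\|p(T)\|\le 1$. The only technical nuisance is the invertibility of $I-Q(T)A$, which may fail when $\|Q(T)A\|=1$; I would first carry out the estimate where the relevant Neumann series manifestly converges and then pass to a limit to cover the boundary case.

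The implication (3) $\Rightarrow$ (1) is the heart of the matter, and I would approach it by duality. Let $\mathcal C$ be the closed convex cone of Hermitian kernels on $\mathcal D_Q$ generated by the expressions $G(z)\bigl[I\otimes(I-Q(z)Q(w)^*)\bigr]G(w)^*$, as $G$ ranges over analytic functions on $\mathcal D_Q$. Condition (1) is precisely the assertion that the kernel $1-p(z)p(w)^*$ belongs to $\mathcal C$. If it does not, a Hahn-Banach separation produces a Hermitian functional $L$ that is nonnegative on $\mathcal C$ but strictly negative at $1-p(z)p(w)^*$; a GNS-type construction then manufactures from $L$ a Hilbert space, a commuting $n$-tuple $T$ with $I-Q(T)Q(T)^*\ge0$ (this positivity being exactly what $L\ge0$ on $\mathcal C$ encodes), and a vector $\xi$ with $\langle(I-p(T)p(T)^*)\xi,\xi\rangle<0$, contradicting (3).

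I expect the main obstacle to be precisely this last construction: extracting from the separating functional a genuine tuple of \emph{bounded, commuting} operators lying in $\mathcal T_Q$. Arranging commutativity, boundedness, and the defining positivity $I-Q(T)Q(T)^*\ge 0$ all at once forces a careful choice of the cone $\mathcal C$, so that nonnegativity of $L$ on $\mathcal C$ translates into exactly the inequality defining $\mathcal T_Q$, followed by an inductive-limit (or Arveson-extension) argument to represent $L$ on an honest Hilbert space. It is also here that one must invoke the von Neumann inequality for matrix coefficients of all sizes, rather than only the scalar case, in order to obtain the matrix-valued factorization in (1).
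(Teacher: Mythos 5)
You should first note the status of this statement in the paper: Theorem~\ref{T:bb} is not proved there at all --- it is quoted as a special case of \cite[Theorem 1.5]{bb} --- but the paper does prove the directly analogous Theorem~\ref{T:main} in Section~\ref{S:main}, and your outline follows exactly that template: lurking isometry for (1)$\Rightarrow$(2), contractivity of a unitary colligation's transfer function plus a limiting argument for (2)$\Rightarrow$(3), and cone separation plus a GNS-type construction for (3)$\Rightarrow$(1). Your first two implications are essentially the paper's arguments. Two points you should make explicit in (2)$\Rightarrow$(3): the realization is an identity of \emph{functions} on $\mathcal D_Q$, so identifying $p(T)$ with the ampliated transfer function requires expanding both sides in power series and matching coefficients (this is where commutativity of the tuple $T$ is used); and since $Q$ here is a general matrix polynomial rather than a linear map, $Q(rT)\neq rQ(T)$, so the regularization must be applied to $Q(T)$ inside the resolvent (replace $Q(T)$ by $rQ(T)$, $r<1$), not to the tuple $T$ itself.

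The genuine gap is in (3)$\Rightarrow$(1). You propose to separate the kernel $1-p(z)p(w)^*$ from a convex cone of kernels defined on all of $\mathcal D_Q$. That space is infinite-dimensional, and Hahn--Banach separation of a point from a convex cone is not available there unless the cone is closed in an appropriate locally convex topology with a continuous separating functional --- and establishing such closedness for the full-domain cone is precisely the step that cannot be waved through. The paper's proof of the analogue avoids this: it fixes a \emph{finite} set $\Lambda=\{\lambda_1,\dots,\lambda_k\}$ and works with the cone $\mathcal C$ of $kN\times kN$ Hermitian matrices $\left[F(\lambda_i)\left(1-\psi(\lambda_i)\psi(\lambda_j)^*\right)F(\lambda_j)^*\right]_{ij}$; in this finite-dimensional setting separation requires only that $\mathcal C$ be closed, which is proved (Claim 1 there) via a lurking-isometry estimate $\|F(\lambda_i)\|\leq (1-\|\lambda_i\|_V)^{-1}\|X\|$ together with the observation that the auxiliary Hilbert space may be taken of dimension at most $2kN$. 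The GNS construction then produces coordinate-multiplication operators on that finite set, so the commutativity you single out as the main obstacle is automatic, and the positivity $I-Q(S)Q(S)^*\geq 0$ falls out of $L\geq 0$ on $\mathcal C$ (Claim 2 there). What then remains --- and what your sketch omits entirely --- is the passage from a factorization on every finite subset to a single factorization on all of $\mathcal D_Q$; the paper accomplishes this with a compactness/inverse-limit argument (Kurosh's theorem) applied to sets of positive semidefinite $B(B(H),M_N)$-valued kernels, followed by the factorization Lemma~\ref{L:mad-sam-main}. Without the finite-subset reduction, the closedness proof, and this globalization step, the separation argument as you state it does not go through.
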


The inequality in statement (3) always implies that $p$ is bounded by
$1$ in $\mathcal D_{\mathcal Q}$.  The converse holds in $\mathbb D$
(von Neumann's inequality) and in $\mathbb D^2$ (with $Q$ given by
(\ref{E:aglerQdef})) by Ando's theorem.  In all other cases the
converse is either false or an open problem.

The purpose of this paper is to prove an analog of the above result
where the single matrix-valued polynomial $Q$ is replaced by a family
of \emph{linear} maps $\sigma:\mathbb C^n\to B(H)$.  In this respect
there is some overlap with the more general results of
\cite{mittal-paulsen}, where general analytic $\sigma$ are considered,
though the point of view of the present paper is somewhat different.
In particular the linear maps $\sigma$ are exactly the maps that are
completely contractive with respect to a given $n$-dimensional
operator space $E$.  (We assume the reader is familiar with the
notions of operator spaces, completely contractive maps, etc.  The
books \cite{paulsen} and \cite{pisier} are excellent references.  The
facts and definitions we require are briefly reviewed in
Section~\ref{S:prelim}.)  The role of these operator spaces (and their
duals) is a central theme.  (In fact the reader who is familiar with
the results of \cite{at, bb, mittal-paulsen} may prefer to read
sections \ref{sec:universal} and \ref{S:examples} first.)

To state our main theorem, we introduce one bit of notation: if
$S=(S_1, \dots S_n)$ is an $n$-tuple of operators on a Hilbert space
$K$, write $\sigma_S$ for the map
\begin{equation}
\sigma_S(z):=\sum_{j=1}^n z_j S_j
\end{equation}
from $\mathbb C^n$ into $B(K)$.  (Evidently every linear map
$\sigma:\mathbb C^n\to B(K)$ has this form.)  Our main theorem, proved
in Section~\ref{S:main}, is the following:

\begin{thm}\label{T:main}  Let $E$ be a finite dimensional operator space, with underlying Banach space $V$, and let $\Omega\subset \mathbb C^n$ denote the open unit ball of $V$.   For each analytic $M_N$-valued polynomial $p$, the following are equivalent:
\begin{itemize}
\item[{\bf 1)}]  {\textbf{\emph{Agler-Nevanlinna factorization.}}}  There exists a Hilbert space $K$, a completely contractive map $\sigma:E\to B(K)$, and an analytic function $F:\Omega \to B(K,\mathbb C^N)$ such that
\begin{equation}\label{E:factorization}
1-p(z)p(w)^* = F(z)\left[ I_K -\sigma(z)\sigma(w)^* \right]F(w)^*
\end{equation}
for all $z,w\in\Omega$.
\item[{\bf 2)}]  {\textbf{\emph{Transfer function realization.}}}  There exists a Hilbert space $K^\prime$, a unitary transformation $U: K^\prime\oplus \mathbb C^N \to K^\prime\oplus \mathbb C^N$ of the form
\begin{equation}
\begin{array}{cc}
		\	& 	\begin{array}{cc}
		 		K^\prime & \mathbb C^N
		 		\end{array}\\
		\begin{array}{c}
			K^\prime \\
			\mathbb C^N
		\end{array}  & 	\left( \begin{array}{ll}
						A & B \\
						C & D
				       \end{array} \right)
	\end{array}
\end{equation}
and a completely contractive map $\sigma:E\to B(K^\prime)$ so that
\begin{equation}\label{E:transfer}
p(z) = D + C(I-\sigma(z)A)^{-1}\sigma(z)B.
\end{equation}
for all $z\in\Omega$.  
\item[{\bf 3)}]  {\textbf{\emph{von Neumann inequality.}}}  If $S$ is a commuting $n$-tuple in $B(K)$ and $\sigma_S$ is completely contractive for $E^*$, then 
\begin{equation}\label{E:vonneumann}
{\|p(S)\|}_{M_N\otimes B(K)} \leq 1.
\end{equation}
\end{itemize}
\end{thm}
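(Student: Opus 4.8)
The plan is to establish the cycle $(1)\Rightarrow(2)\Rightarrow(3)\Rightarrow(1)$, with operator space duality between $E$ and $E^*$ serving as the bridge between the realization side (which lives over $E$) and the von Neumann side (which lives over $E^*$); the argument parallels that of Theorem~\ref{T:bb}, with the matrix polynomial $Q$ replaced throughout by the completely contractive maps $\sigma$. The one fact I will use repeatedly is the following \emph{pairing lemma}, which I take from the operator space preliminaries: if $\sigma\colon E\to B(K)$ is completely contractive with $\sigma(e_j)=T_j$ and $\sigma_S\colon E^*\to B(\mathcal H)$ is completely contractive with $\sigma_S(e_j^*)=S_j$, then $\|\sum_j S_j\otimes T_j\|\le1$; moreover, since $E$ is finite dimensional and $M_k(E^*)=CB(E,M_k)$ completely isometrically, the complete contractivity of $\sigma_S$ on $E^*$ is \emph{equivalent} to this bound holding for every completely contractive $\sigma$ on $E$.

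For $(1)\Rightarrow(2)$ I would run the usual lurking-isometry argument. Setting $h(w)=F(w)^*$ and $g(w)=\sigma(w)^*h(w)$, the factorization (\ref{E:factorization}) rearranges to the Gram identity $I_N+g(z)^*g(w)=p(z)p(w)^*+h(z)^*h(w)$ for all $z,w\in\Omega$. Hence $\bigl(g(w);I_N\bigr)\mapsto\bigl(h(w);p(w)^*\bigr)$ extends to an isometry on the closed span of the left-hand vectors inside $K\oplus\mathbb C^N$, which I then dilate to a unitary $U=\left(\begin{smallmatrix}A&B\\C&D\end{smallmatrix}\right)$ on $K'\oplus\mathbb C^N$ (enlarging $K$ and extending $\sigma$ by $0$ on the new summand should the defect spaces fail to match). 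Eliminating $h(w)=(I-A\sigma(w)^*)^{-1}B$ from the intertwining relations yields $p(w)=D^*+B^*(I-\sigma(w)A^*)^{-1}\sigma(w)C^*$, which is exactly (\ref{E:transfer}) for the (still unitary) colligation $U^*$.

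For $(2)\Rightarrow(3)$, suppose $\sigma_S$ is completely contractive on $E^*$ and write $\sigma(S):=\sum_j S_j\otimes\sigma(e_j)$, so that the pairing lemma gives $\|\sigma(S)\|\le1$. Substituting the contraction $\sigma(S)$ for $\sigma(z)$ in (\ref{E:transfer}) reproduces $p(S)$, since both sides are the same convergent power series; thus $p(S)$ is the transfer function of the unitary $U$ evaluated at $\sigma(S)$, and the unitarity of $U$ gives the standard identity $I-p(S)p(S)^*=C(I-\sigma(S)A)^{-1}\bigl(I-\sigma(S)\sigma(S)^*\bigr)(I-A^*\sigma(S)^*)^{-1}C^*\ge0$, so $\|p(S)\|\le1$. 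The boundary case $\|\sigma(S)\|=1$, where the resolvent may diverge, I would dispatch by applying the computation to $rS$ for $0<r<1$ and letting $r\to1$.

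The implication $(3)\Rightarrow(1)$ is the crux and the step I expect to fight with. I would separate by Hahn--Banach. Let $\mathcal C$ be the cone of Hermitian sesqui-analytic $M_N$-valued kernels on $\Omega$ of the form $F(z)\bigl[I_K-\sigma(z)\sigma(w)^*\bigr]F(w)^*$ with $\sigma$ completely contractive on $E$ and $F$ analytic; it is closed under addition via direct sums of the data. Were $1-p(z)p(w)^*$ outside the closure of $\mathcal C$, a separating functional $L$ would satisfy $L(1-pp^*)<0\le L(\Gamma)$ on $\mathcal C$. Taking $\sigma=0$ shows $L(FF^*)\ge0$, so $L$ defines a semi-inner product on analytic functions; the GNS construction produces a Hilbert space $\mathcal H_L$ carrying the coordinate multiplications $S_j\colon F\mapsto z_jF$. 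The cone inequality $L\bigl((F\sigma)(F\sigma)^*\bigr)\le L(FF^*)$, valid for every completely contractive $\sigma$ on $E$, says precisely that $\|\sum_j S_j\otimes T_j\|\le1$ for all such $(T_j)$, which by the biduality above means $\sigma_S$ is completely contractive on $E^*$. But then $L(1-pp^*)<0$ unwinds to $\|p(S)\|>1$, contradicting (\ref{E:vonneumann}); hence $1-pp^*$ lies in $\overline{\mathcal C}$. A concluding normal-families step---restricting to finitely many points of $\Omega$, where the relevant finite matrix cone is closed, extracting a weak-$*$ limit of the approximating factorizations, and patching into a global analytic $F$ and single $\sigma$---upgrades this to the exact factorization (\ref{E:factorization}). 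Pinning down the GNS space, the coordinate tuple, and above all this final closure are where the genuine work resides.
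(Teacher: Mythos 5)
Your overall architecture (the cycle $(1)\Rightarrow(2)\Rightarrow(3)\Rightarrow(1)$, the lurking isometry, and the duality pairing of Proposition~\ref{P:dual_op_space}) is the same as the paper's, and your first two implications are essentially correct as written: $(1)\Rightarrow(2)$ is the standard lurking-isometry argument, and $(2)\Rightarrow(3)$ is the paper's argument verbatim (pairing lemma, power-series matching at $rS$, the unitarity identity, and $r\to1$). The genuine gap is in $(3)\Rightarrow(1)$, and it sits exactly where you admit it does. First, your global Hahn--Banach separation is not available as stated: the cone $\mathcal C$ of kernels over all of $\Omega$ lives in an infinite-dimensional space, you specify no locally convex topology in which $\mathcal C$ is closed (proving such closedness is tantamount to the theorem itself), and even granting a separation argument, it only yields that $1-p(z)p(w)^*$ lies in the \emph{closure} of $\mathcal C$, not in $\mathcal C$. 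The paper never separates globally; it separates only on a finite set $\Lambda=\{\lambda_1,\dots,\lambda_k\}$, where $\mathcal C$ is a cone of $kN\times kN$ matrices, and proves closedness there (Claim 1) using two facts your sketch omits: the Hilbert space in any factorization over $\Lambda$ may be taken of fixed dimension at most $2kN$, and the lurking isometry gives the uniform bound $\|F(\lambda_i)\|\leq(1-\|\lambda_i\|_V)^{-1}\|X\|$, so a sequence of factorizations of convergent matrices subconverges. This yields an exact factorization on each finite set, with its own $\sigma_\Lambda$ acting on its own space $K_\Lambda$.

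The decisive missing step is the passage from these finite-set factorizations to a single factorization on $\Omega$, and your proposed ``weak-$*$ limit of the approximating factorizations'' does not touch the real obstruction: the maps $\sigma_\Lambda$ act on different Hilbert spaces, so there is no linear structure along which the pairs $(F_\Lambda,\sigma_\Lambda)$ can converge. The paper resolves this with a specific device. It forms the single map $\psi=\bigoplus_\Lambda\psi_\Lambda$ on $H=\bigoplus_\Lambda H_\Lambda$, rewrites each finite-set factorization as a positive semidefinite function $\Gamma_\Lambda:\Lambda\times\Lambda\to B(B(H),M_N)$ satisfying $\Gamma_\Lambda(z,w)[I_H-\psi(z)\psi(w)^*]=1-p(z)p(w)^*$, shows each set $\Phi_\Lambda$ of such $\Gamma_\Lambda$ is nonempty and compact in the pointwise entrywise weak-$*$ topology, and applies Kurosh's inverse-limit theorem to obtain one $\Gamma$ on all of $\Omega\times\Omega$. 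Even then one is not done: converting $\Gamma$ back into a factorization $F(z)(I-\sigma(z)\sigma(w)^*)F(w)^*$ with a \emph{single} completely contractive $\sigma$ requires Lemma~\ref{L:mad-sam-main}, whose proof builds a right regular representation $\pi$ of the C*-algebra generated by the range of $\psi$ and sets $\sigma=\pi\circ\psi$. None of this machinery (nor an equivalent substitute) appears in your proposal, so the proof is incomplete at its crux.
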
 

A quick observation: if the operators $S=(S_1,\dots S_n)$ in statement
(3) are commuting {\em matrices}, then the condition that $\sigma_S$
be completely contractive for $E^*$ is just the condition that $S$
belong to the unit ball of $E$.  This duality is described more fully
in the next section.  

We now let $UC(E^*)$ denote the completion of the polynomials in the
norm
\begin{equation}\label{eqn:universal-norm-def}
\|p\|_{UC(E^*)}:=\sup_S \|p(S)\|
\end{equation}
where the supremum is taken over all $S$ appearing in item 3 of
Theorem~\ref{T:main}, and let $UC(E^*)$ denote the resulting operator
algebra ($UC$ for ``Universal Commutative'').  It is easy to see that
the norm (\ref{eqn:universal-norm-def}) controls the supremum norm over
$\Omega$, and hence every element of $UC(E^*)$ is a continuous
function on the closure of $\Omega$ and analytic in $\Omega$.

These algebras $UC(E^*)$ are the {\em universal commutative operator
  algebras} of the title.  Indeed, it is evident that $UC(E^*)$ has
the following universal property: if $\sigma:E^*\to B(H)$ is any
completely contractive map with commutative range, then $\sigma$ has a
unique extension to a completely contractive homomorphism
$\hat{\sigma}:UC(E^*)\to B(H)$.  (The map $\sigma$ picks out a
commuting $n$-tuple $S$, and $\hat\sigma$ just evaluates on $S$.)
This is discussed further in Section~\ref{sec:universal}.

The results of this paper overlap with those of \cite{at,bb} only for
those operator spaces $E$ which can be embedded completely
isometrically in $B(H)$ for some \emph{finite-dimensional} Hilbert
space $H$.  However many finite-dimensional operator spaces of
interest do \emph{not} admit such an embedding.  Indeed among the most
interesting operator spaces in the present context are the so-called
\emph{maximal operator spaces} $MAX(V)$, which correspond to the {\em
  minimal $UC$-norms} discussed in Section~\ref{S:examples}.  (In
particular we show that the supremum norm on the tridisk $\mathbb D^3$
is not a $UC$-norm.)  Outside the exceptional cases of the $V=\ell^1$
and $V=\ell^\infty$ norms on $\mathbb C^2$, we know of no maximal
space which embeds in a matrix algebra (or even a nuclear C*-algebra;
in fact for every $n>16$ there exist $V$ with ${\rm dim}V=n$ such that
$MAX(V)$ cannot embed in a nuclear C*-algebra.  See
\cite[pp.340--341]{pisier}).

\section{Preliminaries}\label{S:prelim}
\subsection{Operator spaces and duality}
Let $\|\cdot\|$ be a norm on $\mathbb C^n$; write $V$ for the Banach
space $(\mathbb C^n,\|\cdot\|).$ Let $\Omega$ denote the open unit
ball of $V$:
\begin{equation*}
\Omega = \{z\in \mathbb C^n : \|z\|<1\}
\end{equation*}
Let $\langle \cdot, \cdot\rangle$ denote the standard \emph{symmetric} (not Hermitian)  inner product on $\mathbb C^n$:
\begin{equation*}
\langle z,w\rangle =\sum_{j=1}^n z_j w_j
\end{equation*}
We consider the dual space $V^*$ with respect to this pairing, and let
$\Omega^*$ denote the dual unit ball:
\begin{equation*}
\Omega^* = \{z\in \mathbb C^n : |\langle z, w\rangle|<1 \text{ for all } w\in\Omega\}
\end{equation*}

We will equip $V$ with various (concrete) operator space structures;
each of these is determined by an isometric mapping
\begin{equation*}
\varphi : V\to B(H)
\end{equation*}
where $H$ is a Hilbert space (of arbitrary dimension).  More explicitly, if we write vectors $z\in V$ in coordinate form with respect to the standard basis $e_1, \dots e_n$, we will write $T_j =\varphi(e_j)$ so that
\begin{equation*}
\varphi(z)=\varphi(\sum z_je_j) =\sum z_j T_j :=\langle z, T\rangle
\end{equation*}
The matrix norm structure on $E=(V,\varphi)$ is determined explicitly as follows:  if $A_1, \dots A_n$ are matrices (all of some fixed size $k\times l$) then 
\begin{equation*}
\|(A_1\ \dots A_n)\|_{\varphi} := \|\sum A_j\otimes T_j\|
\end{equation*}
where the latter norm is the standard one in $M_{kl}\otimes B(H)$, obtained by identifying $M_{kl}\otimes B(H)$ with $B(H^{l}, H^{k})$.  It will be convenient to write
\begin{equation*}
\langle A, T\rangle := \sum A_j\otimes T_j.
\end{equation*}

Given an operator space structure $E=(V,\varphi)$ and a linear map
$\psi:V\to B(K)$, we say $\psi$ is \emph{completely contractive} with
respect to $\varphi$ if
\begin{equation}\label{E:cc}
\|\sum A_j \otimes \psi(e_j)\|\leq \|\sum A_j\otimes \varphi(e_j)\|
\end{equation}
for all $n$-tuples of matrices $A=(A_1, \dots A_n)$.  The map $\psi$
will be called \emph{completely isometric} if equality holds in
(\ref{E:cc}) for all $A$.

An operator space structure $E$ over $V$ naturally determines a dual
operator space structure $E^*$ over $V^*$, by declaring
\begin{equation*}
E^* := CB(E, \mathbb C)
\end{equation*}
At the first matrix level, $M_1(E^*)$ is isometrically $V^*$.  The
$M_m(E^*)$-norm of an $m\times m$ matrix $A$ with entries from $V^*$
is then given by the $CB$ norm of the map from $V$ to $M_m(\mathbb C)$
induced by $A$.  In the case that $E$ is finite dimensional, the
duality can be described much more concretely in terms of a pairing
between completely contractive maps for $E$ and $E^*$.  It will be
helpful to work this out very explicitly: first we recall that, since
$V$ is identified with $\mathbb C^n$ as a vector space, by the
``canonical shuffle'' elements of $M_m(E)$ may be presented in one of
two ways: either as $m\times m$ matrices with entries from $V$,
\begin{equation*}
A=[ \vec{a}_{ij}]_{i,j=1}^m, \quad \vec{a}_{ij}=(a_{ij}^1, \dots a_{ij}^n)\in V,
\end{equation*}
 or as $n$-tuples of $m\times m$ matrices 
\begin{equation*}
A= [A_1 \dots A_n]  
\end{equation*}
where the $i,j$ entry of $A_k$ is $a_{ij}^k$.  In general we will
prefer the latter form.  In particular it will be desirable to
describe the matrix norms on $E^*$ using these expressions.  Fix an
element $A\in M_m(E^*)$.  Then $A$ induces a map from $V$ to
$M_m(\mathbb C)$ via
\begin{equation*}
A\cdot \vec{v} =[ \langle \vec{a}_{ij}, \vec{v}\rangle ]_{i,j=1}^m 
\end{equation*}
In turn, $A$ sends an element $B\in M_l(E)$ to the $ml\times ml$ matrix
\begin{equation*}
A\cdot B = {[ \langle \vec{a}_{ij}, \vec{b}_{pq}\rangle ]_{i,j=1}^m}_{p,q=1}^l
\end{equation*}
Using the definition of the symmetric pairing $\langle,
\cdot,\cdot\rangle$ this last matrix may be written as a sum
\begin{equation*}
{{\left[ \sum_{k=1}^n a_{ij}^k b_{pq}^k \right]}_{i,j=1}^m}_{p,q=1}^l
\end{equation*}
Now, for fixed $k$, the $ml\times ml$ matrix 
\begin{equation*}
{[a_{ij}^k b_{pq}^k ]_{i,j=1}^m}_{p,q=1}^l
\end{equation*}
may be canonically identified with the Kronecker tensor product
$A_k\otimes B_k$.  Thus, up to a canonical shuffle, the matrix $A\cdot
B$ is equal to
\begin{equation*}
\sum_{k=1}^n A_k\otimes B_k
\end{equation*}
Finally, by the definition of the matrix norms on $E^*$, we have that
the norm of $A$ in $M_m(E^*)$ is equal to
\begin{equation}\label{E:dual_norm_tensor}
\|A\|_{M_m(E^*)}:=\sup \| A\cdot B\|_{M_{ml}(\mathbb C)}=\sup{\left\|\sum_{k=1}^n A_k\otimes B_k\right\|}_{M_{ml}(\mathbb C)}
\end{equation}
where the supremum is taken over all $l\geq 1$ and all $B$ in the unit
ball of $M_l(E)$.  Similarly, we have for all $B\in M_l(E)$
\begin{equation}\label{E:E_norm_tensor}
\|B\|_{M_l(E)} =\sup{\left\|\sum_{k=1}^n A_k\otimes B_k\right\|}_{M_{ml}(\mathbb C)}
\end{equation}
where the supremum is taken over all $m\geq 1$ and all $A$ in the unit
ball of $M_m(E)^*$.

The above considerations extend naturally to the setting of completely
contractive maps.  Given an $n$-tuple of operators $T=(T_1, \dots
T_n)$ on a Hilbert space $B(H)$, define a linear map $\sigma_T:\mathbb
C^n\to B(H)$ by
\begin{equation*}
\sigma_T(z)=\sum_{j=1}^n z_j T_j.
\end{equation*}
\begin{prop}\label{P:dual_op_space}
Let $E$ be an $n$-dimensional operator space and let $E^*$ denote its
dual.  Given an $n$-tuple of operators $S=(S_1, \dots S_n)$, the map
$\sigma_S$ is completely contractive for $E^*$ if and only if
\begin{equation}\label{E:ST_pairing}
{\left\|{\sum_{j=1}^n S_j\otimes T_j}\right\|}_{min}\leq 1
\end{equation}  
for all $n$-tuples $T$ for which the map $\sigma_T$ is completely
contractive for $E$.
\end{prop}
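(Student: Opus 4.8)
The plan is to first set up a finite-dimensional duality dictionary and then reduce the infinite-dimensional tuples $T$ to matrices by compression, at which point the hypothesis applies directly. The key preliminary observation I would record is the following: an $n$-tuple $A=(A_1,\dots,A_n)$ of $m\times m$ matrices lies in the closed unit ball of $M_m(E^*)$ if and only if the map $\sigma_A\colon V\to M_m$, $e_j\mapsto A_j$, is completely contractive for $E$. This is immediate from the tensor formulas (\ref{E:dual_norm_tensor}) and (\ref{E:E_norm_tensor}) together with the symmetry $\|\sum_k A_k\otimes B_k\|=\|\sum_k B_k\otimes A_k\|$ of the spatial tensor norm: unwinding the definition of $\|A\|_{M_m(E^*)}\le 1$ gives exactly the family of inequalities $\|\sum_k A_k\otimes B_k\|\le \|B\|_{M_l(E)}$ over all $l$ and all $B\in M_l(E)$, which is precisely the statement that $\sigma_A$ is completely contractive for $E$ in the sense of (\ref{E:cc}).

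For the easy implication, I would assume $\|\sum_j S_j\otimes T_j\|_{min}\le 1$ for every Hilbert-space tuple $T$ with $\sigma_T$ completely contractive for $E$, and verify $\sigma_S$ is completely contractive for $E^*$. By (\ref{E:cc}) this means checking $\|\sum_j A_j\otimes S_j\|\le \|A\|_{M_m(E^*)}$ for every matrix tuple $A$; after normalizing it suffices to treat $A$ in the unit ball of $M_m(E^*)$. By the dictionary above $\sigma_A$ is then completely contractive for $E$, and since matrices are operators on the finite-dimensional Hilbert space $\mathbb C^m$, the hypothesis applies with $T=A$ to give $\|\sum_j A_j\otimes S_j\|=\|\sum_j S_j\otimes A_j\|\le 1$.

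For the converse, I would assume $\sigma_S$ is completely contractive for $E^*$ and take an arbitrary tuple $T$ on a possibly infinite-dimensional space $K$ with $\sigma_T$ completely contractive for $E$. The point is to replace $T$ by matrices. For a finite-dimensional subspace $F\subseteq K$ with orthogonal projection $P_F$, set $\tilde T_j=P_F T_j P_F$. Since the compression $X\mapsto V^*XV$ by the isometric inclusion $V\colon F\hookrightarrow K$ is completely contractive and $\sigma_{\tilde T}(z)=P_F\,\sigma_T(z)\,P_F$, the map $\sigma_{\tilde T}$ is again completely contractive for $E$; moreover $\tilde T$ is a matrix tuple, so by the dictionary $\tilde T$ lies in the unit ball of $M_{\dim F}(E^*)$. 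The assumption on $S$ then yields $\|\sum_j S_j\otimes\tilde T_j\|=\|\sum_j\tilde T_j\otimes S_j\|\le\|\tilde T\|_{M_{\dim F}(E^*)}\le 1$. Finally, since $I\otimes P_F\uparrow I$ strongly as $F$ increases, one has $\|\sum_j S_j\otimes T_j\|=\sup_F\|(I\otimes P_F)\big(\sum_j S_j\otimes T_j\big)(I\otimes P_F)\|=\sup_F\|\sum_j S_j\otimes\tilde T_j\|\le 1$.

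I expect the main obstacle to be exactly this last passage from general Hilbert-space operators to matrices, which is what makes statement (3) genuinely about all $T$ rather than only finite-dimensional ones. Everything hinges on two facts that I would make sure to state cleanly: that compressions are completely contractive (so that complete contractivity of $\sigma_T$ is inherited by the compressed tuple), and that the norm $\|\sum_j S_j\otimes T_j\|$ is recovered as the supremum of the norms of its compressions by the projections $I\otimes P_F$. The duality dictionary itself is essentially a bookkeeping consequence of the formulas already established in the excerpt, so the conceptual weight of the argument sits in the approximation step.
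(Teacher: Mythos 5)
Your proof is correct and takes essentially the same route as the paper's: the easy direction specializes the tensor pairing to matrix tuples via the duality formula (\ref{E:dual_norm_tensor}) (your ``dictionary''), and the hard direction compresses $T$ to finite-dimensional pieces, observes that the compressed tuple lies in the unit ball of $M_m(E^*)$ because compression preserves complete contractivity, and recovers $\left\|\sum_j S_j\otimes T_j\right\|$ in the limit. The only cosmetic difference is that you index over the net of all finite-dimensional subspaces (so no separability assumption is needed and the norm is a supremum of compression norms), whereas the paper fixes an orthonormal basis of a separable $H$, uses the projections $P_k$ onto the spans of the first $k$ basis vectors, and concludes by strong operator convergence.
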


\textbf{Remark:} Throughout this paper, the norm in expressions such
as (\ref{E:ST_pairing}) is understood to be the \emph{minimal} tensor
norm, that is, if $S$ and $T$ act on Hilbert spaces $H$ and $K$
respectively, the norm of the sum $\sum_{j=1}^n S_j\otimes T_j$ is its
norm as an operator on $H\otimes K$.  We will henceforth omit the
\emph{min} subscript.

\begin{proof}
Suppose $\sigma_S$ is completely contractive for $E^*$.  Then for all
$A=(A_1,\dots A_n)$ in the unit ball of $M_l(E^*)$,
\begin{equation*}
\left\|\sum_{k=1}^n S_k \otimes A_k\right\|\leq 1
\end{equation*}
Now let $T$ be an $n$-tuple of operators on a (separable) Hilbert
space $H$, such that $\sigma_T$ is completely contractive for $E$.
Fix an orthonormal basis for $H$ and let $P_k$ be the projection onto
the span of the first $k$ basis vectors.  Define an $n$-tuple of
$k\times k$ matrices
\begin{equation*}
A_j^k =P_kT_jP_k.
\end{equation*}
(The matrix of $A_j^k$ is written with respect to the fixed basis of
$H$.)  We claim that
\begin{equation*}
A^k=(A_1^k, \dots A_n^k)
\end{equation*}
belongs to the unit ball of $M_k(E^*)$.  To see this, by
(\ref{E:dual_norm_tensor}) it suffices to prove that
\begin{equation*}
\left\|\sum_{j=1}^n A_j^k\otimes B_j  \right\|\leq 1
\end{equation*}
for all $B=(B_1, \dots B_n)$ in the unit ball of $M_l(E)$, for all
$l$.  But since $\sigma_T$ is completely contractive for $E$, the map
$\sigma_k:=P_k\sigma_T P_k$ is as well, and we have
\begin{align*}
\left\|\sum_{j=1}^n B_j\otimes A_j^k  \right\|&= \left\|(I\otimes P_k)\left(\sum_{j=1}^n B_j\otimes T_j\right) (I\otimes P_k) \right\|\\
&\leq \left\|\sum_{j=1}^n B_j\otimes T_j  \right\|\\
&\leq 1
\end{align*}
since $\sigma_T$ is completely contractive.  This proves the claim.

Now, by the hypothesis that $\sigma_S$ is completely contractive for
$E^*$,
\begin{equation*}
\left\|\sum_{j=1}^n S_j\otimes A_j^k  \right\|\leq 1
\end{equation*}
for all $k$, but since 
\begin{equation*}
\sum_{j=1}^n S_j\otimes A_j^k \to \sum_{j=1}^n S_j\otimes T_j 
\end{equation*}
in the strong operator topology, we have $\|\sum_{j=1}^n S_j\otimes
T_j\|\leq 1 $, as desired.

For the converse, suppose that 
\begin{equation*}
\left\|\sum_{j=1}^n S_j\otimes T_j \right\|\leq 1
\end{equation*}
for all $T$ such that $\sigma_T$ is completely contractive for $E$.
By (\ref{E:dual_norm_tensor}), the map $\sigma_A$ is completely
contractive for $E$ whenever $A$ is an $n$-tuple of matrices in the
unit ball of $M_m(E^*)$.  It is then immediate that $\sigma_S$ is
completely contractive for $E^*$.
\end{proof}

\subsection{Factorization of positive semidefinite functions}
Let $\Omega$ be a set and $K$ a Hilbert space.  Following \cite{dmm},
a function $\Gamma:\Omega\times\Omega\to B(K)^*$ is called
\emph{positive semidefinite} if
\begin{equation}\label{E:semidef-gamma-scalar}
\sum_{z,w\in\Lambda}\Gamma(z,w)(f(z)f(w)^*) \geq 0
\end{equation}
for every finite subset $\Lambda\subset \Omega$ and every function
$f:\Lambda\to B(K)$.  This definition may be naturally extended if we
replace $B(K)^*$ with $B(B(K), M_N(\mathbb C))$: then $\Gamma:
\Omega\times\Omega \to B(B(K),M_N)$ is positive semidefinite if and
only if
\begin{equation}\label{E:semidef-gamma-matrix}
\sum_{z,w\in\Lambda}v(z)\Gamma(z,w)(f(z)f(w)^*)v(w)^* \geq 0
\end{equation}
for all finite $\Lambda \subset \Omega$, and all functions
$f:\Lambda\to B(K), v:\Lambda\to \mathbb C^N$.  In the scalar case,
the following lemma reduces to \cite[Proposition 4.1]{dmm}.  The proof
of the matrix-valued version stated here is entirely analogous and is
omitted.
\begin{lem}\label{L:mad-sam}
A function $\Gamma:\Omega\times\Omega \to B(B(K),M_N)$ is positive
semidefinite if and only if there exists a Hilbert space $H$ and a
function $G:\Omega\to B(B(K), B(H,\mathbb C^N))$ such that
\begin{equation}\label{E:mad-sam-factor}
\Gamma(z,w)(ab^*) =(G(z)[a])(G(w)[b])^*
\end{equation}
for all $a, b\in B(K)$.  
\end{lem}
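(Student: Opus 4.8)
The plan is to prove Lemma~\ref{L:mad-sam} by a Kolmogorov--Gelfand--Naimark--Segal (GNS) type construction, exactly as in the scalar case \cite{dmm} but carrying the $\mathbb C^N$-valued test vectors along. The direction that a factorization \eqref{E:mad-sam-factor} forces positive semidefiniteness is immediate: substituting $\Gamma(z,w)(f(z)f(w)^*)=(G(z)[f(z)])(G(w)[f(w)])^*$ into \eqref{E:semidef-gamma-matrix} collapses the sum to $\big\|\sum_{z\in\Lambda}(G(z)[f(z)])^*v(z)^*\big\|^2\ge 0$, so essentially all the work is in the converse.

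For the converse, I would build $H$ as a completion. Let $\mathcal V_0$ be the free complex vector space on symbols $e_{z,a,\xi}$ indexed by $z\in\Omega$, $a\in B(K)$, $\xi\in\mathbb C^N$, and define a sesquilinear form on $\mathcal V_0$ by declaring $\langle e_{z,a,\xi},e_{w,b,\eta}\rangle := \eta^*\,\Gamma(z,w)(ab^*)\,\xi$ and extending (conjugate-)linearly. For a finite sum $x=\sum_i e_{z_i,a_i,\xi_i}$ one computes $\langle x,x\rangle=\sum_{i,j}\xi_j^*\,\Gamma(z_i,z_j)(a_ia_j^*)\,\xi_i$, which is precisely the quantity in \eqref{E:semidef-gamma-matrix} (the $z_i$ playing the role of the points of $\Lambda$, taken with multiplicity, and $a_i,\xi_i$ the values of $f,v$); hence the form is positive semidefinite, and the Hermitian symmetry $\Gamma(w,z)(ba^*)=[\Gamma(z,w)(ab^*)]^*$ follows automatically, since a sesquilinear form that is real and nonnegative on the diagonal is conjugate-symmetric. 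Modding out by the null space $\mathcal N=\{x:\langle x,x\rangle=0\}$ and completing yields a Hilbert space $H$ with classes $[e_{z,a,\xi}]$. Because $\Gamma(z,w)(ab^*)$ is linear in $a$ and in $\xi$, the relations $[e_{z,a_1+a_2,\xi}]=[e_{z,a_1,\xi}]+[e_{z,a_2,\xi}]$ and $[e_{z,a,\xi_1+\xi_2}]=[e_{z,a,\xi_1}]+[e_{z,a,\xi_2}]$ (and scalar homogeneity) hold in $H$, each difference having zero seminorm. I would then define $G(z)\in B(B(K),B(H,\mathbb C^N))$ through its adjoint, setting $(G(z)[a])^*\eta:=[e_{z,a,\eta}]$; the identity $\|(G(z)[a])^*\eta\|_H^2=\eta^*\Gamma(z,z)(aa^*)\eta$ shows this is bounded with $\|G(z)[a]\|\le\|\Gamma(z,z)\|^{1/2}\|a\|$, and the relations above show $a\mapsto G(z)[a]$ is linear. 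Computing $\langle(G(z)[a])(G(w)[b])^*\xi,\eta\rangle$ as an inner product of two classes in $H$ and invoking the Hermitian symmetry then recovers \eqref{E:mad-sam-factor}; this last step is just bookkeeping of adjoints and of which slot is conjugate-linear.

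The step I expect to be the crux is the positive semidefiniteness of the form on $\mathcal V_0$, and in particular confirming that the hypothesis is strong enough. The subtlety, invisible in the scalar case, is that restricting \eqref{E:semidef-gamma-matrix} to a single \emph{repeated} point $z_0$ is exactly the assertion that $\Gamma(z_0,z_0):B(K)\to M_N$ is completely positive (the $a_i,\xi_i$ at $z_0$ being the data witnessing complete positivity), whereas the factorization at $z_0$ forces precisely this. When $N=1$ one needs only ordinary positivity of the functional $\Gamma(z_0,z_0)$, which is automatic, so \cite{dmm} never confronts the distinction; here I must read \eqref{E:semidef-gamma-matrix} as permitting the test points to be repeated (equivalently, as built from finite families rather than genuine subsets), and it is this matrix-level, multi-vector positivity that supplies the off-diagonal cross terms $\xi_j^*\Gamma(z_i,z_j)(a_ia_j^*)\xi_i$ required for $\langle x,x\rangle\ge 0$. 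Granting that reading, every remaining step is a routine transcription of the scalar argument, which is why the lemma can fairly be called ``entirely analogous.''
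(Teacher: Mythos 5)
Your overall strategy---a Kolmogorov/GNS-type completion carrying the $\mathbb{C}^N$-valued test vectors along, together with the observation that \eqref{E:semidef-gamma-matrix} must be read as allowing repeated points (the complete-positivity phenomenon that is invisible when $N=1$)---is exactly the intended one; the paper omits the proof, deferring to the scalar case in \cite{dmm}. However, there is a genuine error at the central step: your sesquilinear form attaches the conjugated vector to the wrong argument of $\Gamma$. With your definition $\langle e_{z,a,\xi},e_{w,b,\eta}\rangle=\eta^*\Gamma(z,w)(ab^*)\xi$, the diagonal value is $\langle x,x\rangle=\sum_{i,j}\xi_j^*\Gamma(z_i,z_j)(a_ia_j^*)\xi_i$, whereas the quantity in \eqref{E:semidef-gamma-matrix} (taking $v(z_i)=\xi_i^*$ as row vectors) is $\sum_{i,j}\xi_i^*\Gamma(z_i,z_j)(a_ia_j^*)\xi_j$. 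These are \emph{not} equal: your sum tests the partial (block) transpose of the Gram block matrix $[\Gamma(z_i,z_j)(a_ia_j^*)]_{i,j}$, and partial transposes of positive semidefinite block matrices need not be positive semidefinite. Concretely, take $N=2$, $K=\mathbb{C}$, two points $z_1\neq z_2$ in $\Omega$, set $G(z_i)[c]=c\,e_i$ (standard basis columns of $\mathbb{C}^2$), $G(z)=0$ elsewhere, and let $\Gamma(z,w)(ab^*):=(G(z)[a])(G(w)[b])^*$ with $H=\mathbb{C}$. This $\Gamma$ is positive semidefinite in the sense of \eqref{E:semidef-gamma-matrix} (it even has a factorization of exactly the desired form), yet for $x=e_{z_1,1,\xi_1}+e_{z_2,1,\xi_2}$ with $\xi_1=e_2$, $\xi_2=-e_1$ your form gives $\langle x,x\rangle=\sum_{i,j}(\xi_j^*e_i)(e_j^*\xi_i)=-2<0$. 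So the pre-Hilbert space cannot be built as written; moreover your appeal to ``a form nonnegative on the diagonal is conjugate-symmetric'' is circular here, since nonnegativity on the diagonal is precisely what fails. The same transposition propagates: from $(G(z)[a])^*\eta:=[e_{z,a,\eta}]$ and your form one obtains $\Gamma(z,w)(ab^*)=(G(w)[b])(G(z)[a])^*$ with $G(z)$ conjugate-linear in $a$, not the asserted \eqref{E:mad-sam-factor}.

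The repair is small but essential: the conjugated vector must travel with the \emph{first} argument of $\Gamma$. Define instead $\langle e_{z,a,\xi},e_{w,b,\eta}\rangle:=\eta^*\Gamma(w,z)(ba^*)\xi$, with the symbol $e_{z,a,\xi}$ standing for the eventual vector $(G(z)[a])^*\xi$, so that the embedding is conjugate-linear in $a$ and linear in $\xi$. Then $\langle x,x\rangle=\sum_{i,j}\xi_i^*\Gamma(z_i,z_j)(a_ia_j^*)\xi_j$, which is exactly \eqref{E:semidef-gamma-matrix} read with multiplicities, and the rest of your argument---the bound $\|(G(z)[a])^*\xi\|^2=\xi^*\Gamma(z,z)(aa^*)\xi$, the linearity relations holding modulo null vectors, and the recovery of \eqref{E:mad-sam-factor} from $\xi^*(G(z)[a])(G(w)[b])^*\eta=\langle[e_{w,b,\eta}],[e_{z,a,\xi}]\rangle=\xi^*\Gamma(z,w)(ab^*)\eta$---goes through verbatim, with no separate appeal to Hermitian symmetry needed. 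Your closing point about repeated test points is correct and worth keeping: for $N>1$ the multiplicity reading of \eqref{E:semidef-gamma-matrix} is genuinely stronger at a single point (it is complete positivity of $\Gamma(z_0,z_0)$), and it is what the construction uses.
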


\begin{lem}\label{L:mad-sam-main}  Suppose $E$ is a finite dimensional operator space, $\psi:E\to B(K)$ is completely contractive map, and $\Gamma:\Omega\times\Omega \to B(B(K),M_N)$ is a positive semidefinite function.  Then there exists a Hilbert space $H$, a completely contractive map $\sigma:E\to B(H)$ and a function $F:\Omega\to B(H, \mathbb C^N)$ such that
\begin{equation*}
\Gamma(z,w)[I_K - \psi(z)\psi(w)^*] =
F(z)(I_H-\sigma(z)\sigma(w)^*)F(w)^*.
\end{equation*}
\end{lem}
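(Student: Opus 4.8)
The plan is to use Lemma~\ref{L:mad-sam} to linearize $\Gamma$ into a Gram form, read off the desired $F$ immediately, and then manufacture the completely contractive map $\sigma$ by a lurking-isometry argument whose contractivity is forced by the complete contractivity of $\psi$.

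First I would apply Lemma~\ref{L:mad-sam} to the positive semidefinite function $\Gamma$, obtaining a Hilbert space $H$ and a function $G\colon\Omega\to B(B(K),B(H,\mathbb C^N))$ with $\Gamma(z,w)(ab^*)=G(z)[a]\,(G(w)[b])^*$. Writing $W_j=\psi(e_j)$ and exploiting that each $\Gamma(z,w)$ is linear on $B(K)$, together with $I_K=I_K I_K^*$ and $\psi(z)\psi(w)^*=\psi(z)\,\psi(w)^*$, this yields
\[
\Gamma(z,w)\bigl[I_K-\psi(z)\psi(w)^*\bigr]=F(z)F(w)^*-\Phi(z)\Phi(w)^*,
\]
where $F(z):=G(z)[I_K]\in B(H,\mathbb C^N)$ and $\Phi(z):=G(z)[\psi(z)]$. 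This $F$ (with this $H$) is already the candidate for the conclusion, and the whole problem reduces to producing a completely contractive $\sigma\colon E\to B(H)$ with $F(z)\sigma(z)=\Phi(z)$ for all $z$; for then $F(z)\bigl(I-\sigma(z)\sigma(w)^*\bigr)F(w)^*=F(z)F(w)^*-\Phi(z)\Phi(w)^*$ is exactly the asserted factorization.

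To build $\sigma=(V_1,\dots,V_n)$ I would define its adjoint on the subspace $\mathcal D=\operatorname{span}\{F(w)^*\eta:w\in\Omega,\ \eta\in\mathbb C^N\}$ by $V_j^*\,F(w)^*\eta:=G(w)[W_j]^*\eta$, extended to be zero on $\mathcal D^{\perp}$. With this prescription $F(z)V_j=G(z)[W_j]$, so $F(z)\sigma(z)=\sum_j z_j\,G(z)[W_j]=G(z)[\psi(z)]=\Phi(z)$, as required. That each $V_j$ is well defined and bounded follows from the single-operator inequality $\|W_j\|^2 I_K-W_jW_j^*\ge0$: factoring it and invoking the positivity of $\Gamma$ in the Gram form of Lemma~\ref{L:mad-sam} gives $\|\sum_p G(w_p)[W_j]^*\eta_p\|\le\|W_j\|\,\|\sum_p F(w_p)^*\eta_p\|$.

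The crux is the complete contractivity of $\sigma$ for $E$, i.e. $\|\sum_j A_j\otimes V_j\|\le1$ for every tuple $A=(A_1,\dots,A_n)$ in the unit ball of $M_l(E)$. Since this norm equals that of $(\sum_j V_j\otimes A_j)^*$, I would evaluate the adjoint operator on $\Xi=\sum_p F(z_p)^*\eta_p\otimes c_p$ and set $u_p=\eta_p\otimes c_p$; the inequality $\|(\sum_j V_j\otimes A_j)^*\Xi\|\le\|\Xi\|$ then unwinds, after the Gram substitutions above, into
\[
\sum_{p,q}u_q^{\,*}\,\bigl(\Gamma(z_q,z_p)\otimes\mathrm{id}_{M_l}\bigr)\bigl(I-YY^*\bigr)\,u_p\ \ge\ 0,\qquad Y:=\sum_k W_k\otimes A_k\in B(K\otimes\mathbb C^l).
\]
Two facts combine here. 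The amplified map $\mathcal G(z)=G(z)\otimes\mathrm{id}_{M_l}$ still satisfies $(\Gamma(z,w)\otimes\mathrm{id}_{M_l})(XX'^*)=\mathcal G(z)[X]\,\mathcal G(w)[X']^*$ by bilinearity, so writing $I-YY^*=ZZ^*$ with $Z\in B(K)\otimes M_l$ exhibits the left-hand side as $\|\sum_p\mathcal G(z_p)[Z]^*u_p\|^2\ge0$, provided $I-YY^*\ge0$. And $I-YY^*\ge0$ is precisely $\|\sum_k A_k\otimes W_k\|\le1$, which is the complete contractivity of $\psi$ for $E$ applied to the unit-ball tuple $A$. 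I expect this amplification step---correctly tensoring the positivity of $\Gamma$ with $M_l$ and extracting the operator inequality $I-YY^*\ge0$ from the definition of complete contractivity---to be the main obstacle; the surrounding identities are routine Gram-matrix bookkeeping.
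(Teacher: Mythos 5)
Your proposal is correct, and it handles the one genuinely nontrivial point --- the complete contractivity of $\sigma$ --- by a different mechanism than the paper. The paper forms the unital C*-algebra $\mathcal A\subset B(K)$ generated by $\{\psi(z):z\in\Omega\}$, builds a ``right regular representation'' $\pi:\mathcal A\to B(H')$ on the span of the vectors $(G(w)[a])^*v$, $a\in\mathcal A$, via $\pi(a)^*(G(w)[b])^*v=(G(w)[ba])^*v$, checks that $\pi$ is a $*$-homomorphism, and sets $\sigma=\pi\circ\psi$; complete contractivity is then automatic, since $*$-homomorphisms of C*-algebras are completely contractive, so the complete contractivity of $\psi$ enters only through the composition. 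Your construction is the compression of this picture to $b=I_K$, $a=W_j$: you build only the $n$ operators $V_j$ on the closed span of $\{F(w)^*\eta\}$ (extended by zero on the complement), and you must then prove complete contractivity by hand. Your argument for that is sound: flipping to $\sum_j V_j^*\otimes A_j^*$, noting it vanishes on $\mathcal D^\perp\otimes\mathbb C^l$, testing on $\Xi=\sum_p F(z_p)^*\eta_p\otimes c_p$, and rewriting $\|\Xi\|^2-\bigl\|\bigl(\sum_j V_j^*\otimes A_j^*\bigr)\Xi\bigr\|^2$ as $\sum_{p,q}u_q^*\bigl(\Gamma(z_q,z_p)\otimes\mathrm{id}_{M_l}\bigr)(I-YY^*)u_p$ with $Y=\sum_k W_k\otimes A_k$ is correct, and the positivity follows exactly as you say from $\|Y\|\le 1$ (complete contractivity of $\psi$ applied to the unit-ball tuple $A$), the square root $I-YY^*=ZZ^*$, and the amplified Gram identity, which does hold by bilinearity since $M_l$ is finite dimensional. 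The same square-root-plus-Gram trick also correctly gives well-definedness and boundedness of the $V_j$ (this is, incidentally, the step the paper glosses over in asserting $\pi$ is well defined). As for what each approach buys: the paper's route is shorter once one invokes the standard C*-algebraic fact, and it yields more, namely a genuine representation of $\mathcal A$ through which $\sigma$ factors; your route is more elementary and self-contained, avoiding C*-algebras and representation theory entirely, and it makes explicit precisely where the complete contractivity of $\psi$ is consumed.
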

\begin{proof}  Given the completely isometric map $\psi:E\to B(K)$, let $\mathcal A$ denote the unital C*-subalgebra of $B(K)$ generated by the operators $\{\psi(z):z\in\Omega\}$.  Choose $G$ to factor $\Gamma$ as in Lemma~\ref{L:mad-sam}.  Now, in the factorization (\ref{E:mad-sam-factor}), let $H^\prime$ denote the subspace of $H$ spanned by vectors of the form $(G(w)[a])^*v$ for $w\in\Omega, a\in \mathcal A, v\in\mathbb C^N$.  We then obtain a ``right regular representation'' $\pi : \mathcal A\to B(H^\prime)$ by defining
\begin{equation}\label{E:leftrep}
\pi(a)^*(G(w)[b])^*v = (G(w)[ba])^*v
\end{equation} 
It is straightforward to check that $\pi$ is a $*$-homomorphism:
linearity is evident, and for all $x,y\in \mathcal A$ we have
\begin{align}
\pi(xy)^*(G(w)[b])^*v &= (G(w)[bxy])^*v\\
                      &= \pi(y)^*(G(w)[bx])^*v \\
                      &= \pi(y)^*\pi(x)^*(G(w)[b])^*v
\end{align}
so $\pi$ is multiplicative.  Similarly
\begin{align}
u^*G(z)[a]\pi(x)^*(G(w)[b])^*v &= u^*G(z)[a](G(w)[bx])^*v \\
                                 &= u^*\Gamma(z,w)[ax^*b^*]v \\
                                 &= u^*G(z)[ax^*](G(w)[b])^*v \\
                                 &= u^*G(z)[a]\pi(x^*)(G(w)[b])^*v
\end{align}
so $\pi(x^*)=\pi(x)^*$.  Now define $\sigma(z):= \pi(\psi(z))$.  It is
evident that $\sigma$ is a completely contractive map from $E$ to
$B(H^\prime)$.  It follows from (\ref{E:leftrep}) and the definition
of $\mathcal A$ that
\begin{equation}\label{E:rho_formula}
\sigma(w)^*(G(w)[b])^*v =(G(w)[b\psi(w)])^*v
\end{equation}
for all $z\in\Omega$, $b\in \mathcal A$ and $v\in\mathbb C^N$.  

Now define $H=K^\prime$ and $F(z):=G(z)[I_K]$.   It follows from  Lemma~\ref{L:mad-sam} and Equation~\ref{E:rho_formula} that
\begin{equation*}
\Gamma(z,w)[I_K] = G(z)[I_K](G(w)[I_K])^* =F(z)F(w)^*
\end{equation*}
and
\begin{align*}
\Gamma(z,w)[\psi(z)\psi(w)^*] &= (G(z)[I_K\psi(z)])(G(w)[I_K\psi(w)])^* \\
                 &=F(z)\sigma(z)\sigma(w)^*F(w)^*,
\end{align*}
and thus
\begin{equation*}
\Gamma(z,w)( I_K- \psi(z)\psi(w)^*) = F(z)F(w)^* - F(z)\sigma(z)\sigma(w)^*F(w)^*
\end{equation*}
as desired.
\end{proof}

%It will follow from Theorem~\ref{T:main} that this condition defines the unit ball of a norm on the space of polynomials.  Norms which arise in this way will be called \emph{Agler-Nevanlinna norms}(?).   We think of the maps $\psi$ as ``operator-valued test functions;'' one may take a different point of view and think of (\ref{E:agler}) as one way to generalize Nevanlinna's charachterization of holomorphic functions bounded by $1$ in the unit disk $\mathbb D\subset \mathbb C$.  
\section{Main Theorem}\label{S:main}
The proof of each implication in Theorem~\ref{T:main} is handled in a separate subsection.  
\subsection{1 implies 2}
\begin{proof} 
This is a standard application of the ``lurking isometry'' technique.  Rearrange (\ref{E:factorization}) to obtain
\begin{equation}\label{E:plusses}
1+ F(z)\sigma(z)\sigma(w)^*F(w)^* =p(z)p(w)^* + F(z)F(w)^*
\end{equation}
Define subspaces $\mathcal M, \mathcal N \subset H^\prime\oplus \mathbb C^N$ by
\begin{gather*}
\mathcal M=\text{span}\left\{ \begin{pmatrix} \sigma(w)^*F(w)^*x \\
                                             x\end{pmatrix} : w\in\Omega, \ x\in\mathbb C^N \right\} \\
 \mathcal N=\text{span}\left\{ \begin{pmatrix} F(w)^*x \\
   p(w)^*x\end{pmatrix} : w\in\Omega, \  x\in\mathbb C^N \right\}.
\end{gather*}   
The equation (\ref{E:plusses}) then implies the existence of an isometry $U^*:\mathcal M\to \mathcal N$ such that
$$
U^* \begin{pmatrix} \sigma(w)^*F(w)^*x \\
                  x\end{pmatrix} = \begin{pmatrix} F(w)^*x \\
                                                   p(w)^*x\end{pmatrix}
$$
for all $w\in\Omega$ and $x\in\mathbb C^N$.  Enlarging $H^\prime$ to a space $H^{\prime\prime}$ if necessary, we may extend $U^*$ to a unitary (still denoted $U^*$) taking $H^{\prime\prime} \oplus \mathbb C^N$ to itself.  We also regard $\sigma$ as taking $\mathbb C^N$ into $B(H^{\prime\prime})$, by declaring $\sigma(w)x$ to be $0$ for all $w\in\mathbb C^n$ and all $x\in H^{\prime\prime} \ominus H^\prime$.  (Note this extended $\sigma$ is still completely contractive.)  We now write the action of $U^*$ as a unitary colligation
\begin{equation*}
\begin{pmatrix}  A^*  & C^* \\
                 B^* & D^* \end{pmatrix}  \begin{pmatrix} \sigma(w)^*F(w)^*x \\
                  x\end{pmatrix} = \begin{pmatrix} F(w)^*x \\
                                                   p(w)^*x\end{pmatrix}
\end{equation*}
This corresponds to the linear system
\begin{gather}
A^*\sigma(w)^*F(w)^* + C^* = F(w)^* \\
B^*\sigma(w)^*F(w)^* + D^* = p(w)^*
\end{gather}
This system may be solved to obtain
\begin{equation*}
p(z) = D + C(I-\sigma(z)A)^{-1}\sigma(z)B 
\end{equation*}
for all $z\in \Omega$.  (Note that $(I-\sigma(z)A)$ is invertible, since $\|A\|\leq 1$ and $\|\sigma(z)\|\leq \|z\|_V <1$ for all $z\in \Omega$.)
\end{proof}

\subsection{2 implies 3}

\begin{proof}  Write
$$
\sigma(z)=\sum_{j=1}^n z_j T_j.
$$
Let $S=(S_1, \dots S_n)$ induce a completely contractive map $\sigma_S$ of $E^*$ on $B(L)$.  Then by Proposition~\ref{P:dual_op_space}, 
$$
\| \sum_{j=1}^n S_j\otimes T_j\|\leq 1. 
$$
Given the unitary colligation $U$, let $\tilde A =I_L\otimes A$, $\tilde B =I_L \otimes B$, etc.  Fix $0<r<1$; and observe that
\begin{equation}\label{E:transfer_on_S}
p(rS)=\tilde D +\tilde C \langle rS, T\rangle(I-\tilde A \langle rS, T\rangle )^{-1}  \tilde B.
\end{equation}
Since $\|rS\|<1$ and the $S_j$ commute, the right-hand side of (\ref{E:transfer_on_S}) may be expanded in a norm-convergent power series in the $S_j$.  Using (\ref{E:transfer}), we may also expand the left-hand side in the $S_j$, by first expanding $p$ and then substituting $rS$.  The equality then follows by matching coefficients.  
It is now easy to verify that 
\begin{equation}\label{eqn:prpos}
I-p(rS)^*p(rS) \geq 0
\end{equation}
for all $r<1$ and hence $I-p(S)^*p(S) \geq 0$ by letting $r\to 1$.  To prove (\ref{eqn:prpos}), let $A, B, C, D$ be any unitary colligation and $X$ any operator with $\|X\|<1$.  Then if we define
$$
Q= D +C X(I-AX)^{-1} B
$$
a well-known calculation shows that 
$$
I -Q^*Q = B^*(I-AX)^{-1*} (I-X^* X) (I-AX)^{-1} B\geq 0. 
$$
Taking $X=\langle rS, T\rangle$ and $Q=p(rS)$ proves the claim. 
\end{proof}

\subsection{3 implies 1}

\begin{proof}  This is the most involved part of the proof; the argument will be broken into several sub-arguments.  We will first show that, given any finite set $\Lambda\subset \Omega$, there exist $F$ and $\psi$ so that (\ref{E:factorization}) holds for all $z,w\in\Lambda$.  (This constitutes the bulk of the proof.)  We then conclude that such a factorization is valid on all of $\Omega$ via a compactness argument (in particular, by appeal to Kurosh's theorem).  

So, fix a finite set $\Lambda =\{\lambda_1,\dots \lambda_k\}\subset\Omega$.  Consider the cone $\mathcal C$ of $kN\times kN$ Hermitian matrices which can be written in the form
\begin{equation}\label{E:conedef}
A_{ij}=\left[ F(\lambda_i)\left(1-\psi(\lambda_i)\psi(\lambda_j)^* \right)F(\lambda_j)^*  \right]_{ij}
\end{equation}
where $F$ is a function from $\Lambda$ to a Hilbert space $B(K,\mathbb C^N)$ and $\psi$ is a completely contractive map of $E$ into $B(K)$.  It is easy to see that $\mathcal C$ contains all positive semidefinite matrices:  if $A$ is positive semidefinite we may factor it as $A_{ij}=F(\lambda_i)F(\lambda_j)^*$ and take $\psi=0$.  Moreover, observe that for all $A\in\mathcal C$, the Hilbert space $K$ in the above map can be taken to be a \emph{fixed} space of finite dimension at most $2kN$. To see this, note that the factorization that appears in the right hand side of \ref{E:conedef} takes place in the subspace of $K$ given by
\begin{equation*}
\text{span } \{ F(\lambda_i)^*x, \psi(\lambda_i)^*F(\lambda_i)^*x : i=1,\dots k, \ x\in\mathbb C^N\}
\end{equation*}

We now suppose that the $kN\times kN$ Hermitian matrix 
$$
P_{ij}= I_N-p(\lambda_i)p(\lambda_j)^*
$$
does \emph{not} belong to $\mathcal C$.  Our first claim is the following:

\emph{Claim 1:}  $\mathcal C$ is closed.  
  
It follows that there exists a real linear functional $L: M_{kN}^{sa}(\mathbb C)\to \mathbb R$ such that $L(\mathcal C)\geq 0$ but $L(P)<0$.  We extend $L$ to a complex linear functional on all of $M_{kN}(\mathbb C)$ (still denoted $L$) in the standard way.  Using $L$ we construct a pre-Hilbert space:  for functions $F, G:\Lambda\to B(K,\mathbb C^N)$, define
$$
\langle F, G\rangle_L :=L([F(\lambda_i)G(\lambda_j)^*])
$$
Since $L$ is positive on $\mathcal C$ and $\mathcal C$ contains all positive matrices, it follows that $\langle \cdot, \cdot\rangle_L$ is positive semidefinite.  Denote by $\mathcal H$ the resulting pre-Hilbert space.  We next construct an $n$-tuple of operators on $\mathcal H$.  First, if $Q:\Lambda\to B(K)$ is any function, we can define a ``right multiplication operator'' $M_Q$ on $\mathcal H$ via
\begin{equation*}
(M_Q F)(\lambda)=F(\lambda)Q(\lambda)
\end{equation*} 
(In fact, the only $Q$ we need will be scalar multiples of the identity, but it will be helpful to think of this scalar multiplication as occurring on the right rather than the left.)  Now, for each $\lambda_i\in\Lambda$, write its coordinates as
$$
\lambda_i=(\lambda_i^1, \dots \lambda_i^n)
$$
and define operators $S_k:\mathcal H\to \mathcal H$ by
$$
(S_kF)(\lambda_i) :=M_{\lambda^k I}F(\lambda_i)=F(\lambda_i)\lambda_i^k 
$$
We will construct from these operators a completely contractive map of the operator space $E^*$:  

\emph{Claim 2:}  If $\mathcal E$ is any Hilbert space and 
$$
\psi(z) =\sum_{k=1}^n z_k T_k 
$$
is any completely contractive map from $E$ to $B(\mathcal E)$, then the operator
$$
I-\left(\sum_{k=1}^n S_k\otimes T_k\right)^*\left(\sum_{k=1}^n S_k\otimes T_k\right)
$$
is nonnegative on  $\mathcal H\otimes \mathcal E$.

From this claim it follows easily that

\emph{Claim 3:}  If $\langle F, F\rangle_L =0$ then $\langle S_kF, S_kF\rangle_L=0$ for all $k=1,\dots n$.  

We may now construct a Hilbert space from $\mathcal H$ as usual, by passing to the quotient by the space of null vectors and completing; denote the resulting Hilbert space $\mathcal H^\prime$.    Claims 2 and 3 show that the operators $S_k$ pass to well-defined, bounded operators on $\mathcal H^\prime$, which will still denote $S_k$.  It is also immediate from Claim 2 that 
$$
\|\sum_{k=1}^n S_k \otimes T_k\|\leq 1
$$
whenever $\psi(z)=\sum z_k T_k$ is completely contractive for $E$; thus by Proposition~\ref{P:dual_op_space}, the map
$$
\varphi(z) =\sum_{k=1}^n z_k S_k
$$  
is completely contractive for $E^*$.  The proof that (\ref{E:factorization}) is valid on finite sets will now be complete if we can show that $1-p(S)^*p(S)$ is \emph{not} positive on $\mathcal H^\prime$.  Let $J$ denote the $kN\times kN$ which has the $N\times N$ identity matrix $I_N$ in the $i,j$ block for all $i,j=1, \dots k$.  The matrix $J$ may be factored as $G(\lambda_i)G(\lambda_j)^*$ where $G(\lambda_i)=I_N$ for all $i$.  Then 
\begin{align*}
\langle (I-p(S)^*p(S))G,G\rangle_{\mathcal H^\prime} &= \langle (I-p(S)^*p(S))G, G\rangle_L \\
&= L([G(\lambda_i)(I_N-p(\lambda_i)p(\lambda_j)^*)G(\lambda_j)^*]) \\
&= L(I_N-p(\lambda_i)^*p(\lambda_j)) \\
&< 0. 
\end{align*}
We have now proved the existence of the factorization on finite sets, modulo the proofs of the claims, which are now provided.  After these, the factorizations on finite sets will be pieced together, and the proof will be finished.

\emph{Proof of Claim 1:}  To see that $\mathcal C$ is closed we appeal again to the lurking isometry technique.  So, suppose $X\in\mathcal C$.  Since $X$ is Hermitian, by the spectral theorem we may write $X$ as a difference of two positive matrices 
$$
X= P-N
$$
with $\|P\|\leq \|X\|, \|N\|\leq \|X\|$.  Now factor $P$ and $N$ as Grammians:
$$
P_{ij}=\langle p_j , p_i \rangle , \qquad N_{ij}=\langle n_j, n_i\rangle
$$
There exist $F$ and $\psi$ so that
\begin{equation}\label{E:x_eqn}
X_{ij}=\langle p_j , p_i \rangle - \langle n_j, n_i\rangle =F(\lambda_j)^* (1-\psi(\lambda_j)^*\psi(\lambda_i))F(\lambda_i)
\end{equation}
As before, the lurking isometry argument leads to the equation
$$
F(\lambda_i) =(I-A\psi(\lambda_i))^{-1}Bp_i
$$
where $A, B$ belong to a unitary colligation.  Now, $\|A\psi(\lambda_i)\|\leq\|\lambda_i\|_V<1$ and $\|p_i\|\leq \|X\|$ for all $i$, and so 
\begin{equation}\label{E:key_closedness_estimate}
\|F(\lambda_i)\|\leq (1-\|\lambda_i\|_V)^{-1} \|X\|
\end{equation}
for all $i$.  

Let $X_n$ be a sequence in $\mathcal C$ and suppose $X_n\to X$.  For each $n$ we obtain $F_n, \psi_n$ so that (\ref{E:x_eqn}) holds.  By (\ref{E:key_closedness_estimate}) the functions $F_n$ are uniformly bounded, and hence admit a subsequence converging to some $F:\Lambda\to K$.  Since the $\psi_n$ are also uniformly bounded, passing to a further subsequence if necessary, we may assume that $\psi_n\to \psi$ pointwise in norm for some completely contractive $\psi$. (The fact that $\psi(z)$ acts on a finite-dimensional space is used here.)   It follows that this $F$ and $\psi$ factor $X$ as in (\ref{E:x_eqn}), and hence $X\in \mathcal C$.

\emph{Proof of Claim 2:}  Let $F_1, \dots F_d:\Lambda\to B(K,\mathbb C^N)$ and let $e_1, \dots e_d$ be an orthonormal set in $\mathcal E$.    To prove Claim 2 it suffices to show that 
\begin{equation}\label{E:claim2_main}
\left\langle \left(I-\left(\sum_{k=1}^n S_k\otimes T_k\right)^*\left(\sum_{k=1}^n S_k\otimes T_k\right)\right) (\sum F_l\otimes e_l), (\sum F_m \otimes e_m)\right\rangle_{\mathcal H \otimes \mathcal E}
\end{equation}
is positive; this will be the case because this is in fact may be written as the functional $L$ applied to an $kN\times kN$ matrix lying in the cone $\mathcal C$.  To see this, let us write $\tilde T_k$ for the operator $I_K\otimes T_k$ on $K\otimes \mathcal E$, and $\tilde F(\lambda_i) = \sum F_l(\lambda_i)\otimes e_l$.  By the definition of $S$ we have
\begin{align}
\sum_k(S_k\otimes T_k)(F_l\otimes e_l)(\lambda_i)&=\sum_k  F_l(\lambda_i)\lambda_i^k\otimes T_k e_l  \\
&= \left(\sum_k \lambda_i^k \tilde T_k \right) (F_l(\lambda_i)\otimes e_l)\\
&= \langle \lambda_i, \tilde T\rangle (F_l(\lambda_i)\otimes e_l) \\
&= \langle \lambda_i, \tilde T\rangle \tilde F(\lambda_i)
\end{align}
Now, using the fact that $\{e_l\}$ is orthonormal,
\begin{align}
\left\langle \sum F_l\otimes e_l, \sum F_m\otimes e_m \right\rangle_{\mathcal H\otimes \mathcal E} &= L\left(\sum F_l(\lambda_i)F_l(\lambda_j)^*\right) \\
&= L(\tilde F(\lambda_i)\tilde F(\lambda_j)^*)
\end{align}
Combining the above calculations, we find that (\ref{E:claim2_main}) may be written as 
\begin{equation}\label{E:claim2_second}
L\left(\tilde F(\lambda_i)[1-\langle \lambda_i, \tilde T\rangle \langle \lambda_j, \tilde T\rangle^*] \tilde F(\lambda_j)^*\right)
\end{equation}
Since the map $z\to \langle z, T\rangle$ is completely contractive for $E$, the map obtained by replacing $T$ with $\tilde T$ is as well.  It follows that the argument of $L$ in (\ref{E:claim2_second}) belongs to $\mathcal C$, and hence (\ref{E:claim2_main}) is positive, as desired.  

\emph{Proof of Claim 3:}  Trivially, there exists a real number $\alpha >0$ such that, for each $k=1, \dots n$, the map
\begin{equation*}
\sigma(z)= \alpha z_k
\end{equation*}
is a completely contractive map of $E$.  Applying Claim 2 to this map (that is, taking $T_k=\alpha$, $T_j=0$ for $j\neq k$) we get 
\begin{equation*}
I-\alpha^2 S_k^* S_k\geq 0
\end{equation*}
for each $k$.  Thus the operators $S_k$ are bounded on $\mathcal H$, so in particular $\langle S_k F, S_k F\rangle_L =0$ whenever $\langle F, F\rangle_L=0$.  

 We proved that a factorization (\ref{E:factorization}) exists on every finite subset $\Lambda \subset \Omega$.  The extension to all of $\Omega$ is accomplished via a routine application of Kurosh's theorem.  For each finite set $\Lambda\subset \Omega$ fix a factorization (\ref{E:factorization}).  Let $H_\Lambda$ be the Hilbert space on which $\psi$ acts.  Put $H:=\bigoplus_\Lambda H_\Lambda$ and $\psi :=\bigoplus_\Lambda \psi_\Lambda$.  Now for each $\Lambda$ let $\Phi_\Lambda$ be the set of all positive semidefinite functions $\Gamma_\Lambda :\Lambda\times\Lambda\to B(B(H),M_N)$ such that 
\begin{equation}\label{E:kurosh_factor}
1-p(z)p(w)^* = \Gamma_\Lambda(z,w)[I_H -\psi(z) \psi(w)^*]
\end{equation}
for all $z,w\in\Lambda$.  Each $\Phi_\Lambda$ is nonempty, since it contains
\begin{equation}
\Gamma_\Lambda(z,w)[a] =  F(z) P_\Lambda a P_\Lambda F(w)^*
\end{equation}
where $P_\Lambda:H\to H_\Lambda$ is the orthogonal projection.  By identifying $B(B(K),M_N))$ with $M_N(B(K)^*)$, the former space inherits the topology of entrywise weak-* convergence.  The set of functions from $\Lambda\times\Lambda$ to $B(B(K),M_N)$ may then be endowed with the topology of pointwise convergence in this topology on $B(B(K),M_N)$ (in other words, the ``pointwise entrywise weak-* topology'').  The sets $\Phi_\Lambda$ are then compact in this topology; this follows from the boundedness argument in the proof of Claim 1.  It is evident that restriction induces a continuous map $\pi_{\alpha\beta}:\Phi_\alpha\to \Phi_\beta$ when $\beta\subset \alpha$, so by Kurosh's theorem there exists a positive semidefinite $\Gamma$ which satisfies (\ref{E:kurosh_factor}) for all $z,w\in\Omega$.  Finally, applying Lemma~\ref{L:mad-sam-main} to this $\Gamma$ and $\psi$ finishes the proof.
\end{proof}

\section{Universality of $UC(E)$}\label{sec:universal}
In this section, to unclutter the notation a bit, we reverse the roles of $E$ and $E^*$ (which is harmless, since finite-dimensional operator spaces are reflexive), and consider the operator algebras $UC(E)$.  So 
\begin{equation}\label{eqn:univ-section-normdef}
\|p\|_{UC(E)}=\sup_S\{ \|p(S)\| \}
\end{equation}
the supremum taken over commuting $n$-tuples $S$ such that $\sigma_S:E\to B(K)$ is completely contractive.  As noted earlier, if the $S_j$ are matrices, then this is just the condition that $S$ lies in the unit ball of $E^*$.  

Pisier \cite[Chapter 6]{pisier} introduces the \emph{universal (unital) operator algebra} associated to an operator space $E$; this algebra is denoted $OA_u(E)$.  We will not require an explicit construction of $OA_u(E)$ here, only that $OA_u(E)$ has the following properties:
\begin{prop}\label{P:oauE} The following properties characterize $OA_u(E)$:
\begin{enumerate}  
\item There exists a canonical completely isometric embedding 
\[ 
\iota:E\to OA_u(E).
\]
\item If $\sigma:E\to B(H)$ is completely contractive, there exists a \emph{unique} completely contractive unital homomorphism $\hat{\sigma}:OA_u(E)\to B(H)$ extending $\sigma$, i.e. so that $\hat{\sigma}(\iota(x))=\sigma(x)$ for all $x\in E$.
\end{enumerate}
\end{prop}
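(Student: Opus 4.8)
The statement packages two assertions: that an algebra with properties (1) and (2) exists, and that these properties determine it uniquely up to completely isometric isomorphism. Since the paper only uses the extension property, one could simply invoke \cite[Chapter 6]{pisier}; but the plan below is self-contained, treating existence and the characterization separately.

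For existence I would realize $OA_u(E)$ as a quotient of the free unital algebra $\mathcal F$ on the underlying vector space of $E$. On $\mathcal F$, and on all matrices over it, set $\|u\|_u := \sup_\sigma \|\tilde\sigma(u)\|$, the supremum over completely contractive $\sigma:E\to B(H)$, where $\tilde\sigma:\mathcal F\to B(H)$ is the unital homomorphism extending $\sigma$. First I would check this supremum is finite: on a word $e_{i_1}\cdots e_{i_k}$ it is bounded by $\prod_j\|e_{i_j}\|_E$ since each $\|\sigma(e_{i_j})\|\le\|e_{i_j}\|_E$, and the general estimate follows by expanding and using the triangle inequality. Since the norm is moreover attained by the single direct-sum representation $\bigoplus_\sigma\tilde\sigma$ (one reduces to a set of $\sigma$ by a routine cardinality bound), the completion of $\mathcal F/\ker\|\cdot\|_u$ is a genuine concrete unital operator algebra $OA_u(E)$ with canonical map $\iota$. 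Property (1) then holds because the defining supremum includes a completely isometric representation $\sigma_0$ (which exists as $E$ is a concrete operator space), forcing $\|\iota^{(k)}(x)\|\ge\|x\|_{M_k(E)}$, while complete contractivity of every competitor gives the reverse inequality. For (2), the inequality $\|\tilde\sigma(u)\|\le\|u\|_u$ shows each $\tilde\sigma$ descends to a completely contractive unital homomorphism $\hat\sigma$, and uniqueness of the extension follows because $\iota(E)$ generates $OA_u(E)$, so any two extensions agreeing on $\iota(E)$ agree by multiplicativity and continuity.

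For the characterization I would run the standard universal-object argument. Suppose $(A,\iota_A)$ and $(B,\iota_B)$ both satisfy (1) and (2) and are generated by the image of $E$. Embedding $B\subseteq B(H_B)$ completely isometrically and applying (2) for $A$ to the completely contractive map $\iota_B:E\to B(H_B)$ produces a completely contractive unital homomorphism $\Phi:A\to B(H_B)$ with $\Phi\iota_A=\iota_B$; since $\iota_A(E)$ generates $A$ its image lands in $B$, so $\Phi:A\to B$. Symmetrically one obtains $\Psi:B\to A$ with $\Psi\iota_B=\iota_A$. Now $\Psi\Phi$ and $\mathrm{id}_A$ are both completely contractive unital homomorphisms $A\to A$ extending $\iota_A$, so the uniqueness clause of (2) forces $\Psi\Phi=\mathrm{id}_A$, and likewise $\Phi\Psi=\mathrm{id}_B$. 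Hence $\Phi$ is a completely isometric isomorphism intertwining $\iota_A$ and $\iota_B$.

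I expect the main obstacle to be the easily overlooked point that the extension property (2) alone does \emph{not} force $\iota(E)$ to generate the algebra: the closed unital subalgebra generated by $\iota(E)$ again satisfies (1) and (2). Thus the uniqueness half of the characterization genuinely relies on the generation hypothesis, which must be built into the construction and into the meaning of ``characterize''. The only remaining technical care is the finiteness and the operator-algebra verification for $\|\cdot\|_u$, which is routine.
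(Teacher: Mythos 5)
Your proposal is correct, but note that it is doing something the paper never does: the paper offers no proof of this proposition at all. It explicitly remarks that no construction of $OA_u(E)$ will be required and simply quotes these two properties from \cite[Chapter 6]{pisier}, where the algebra is built essentially as you build it — the free unital (tensor) algebra over $E$, with matrix norms given by the supremum over all unital homomorphisms induced by completely contractive maps $\sigma:E\to B(H)$, realized concretely by a direct sum of representations after a set-versus-class cardinality reduction. So your existence argument in effect reconstructs Pisier's definition, and your verifications are sound: complete isometry of $\iota$ follows because the supremum includes a completely isometric $\sigma_0$ (Ruan's theorem) while every competitor is completely contractive; the extension property follows by descent of each $\tilde\sigma$ through the quotient plus density of the image of the tensor algebra; and the uniqueness half is the standard two-arrow categorical argument, correctly invoking the uniqueness clause of (2) applied to $\sigma=\iota_A:E\to B(H_A)$ to force $\Psi\Phi=\mathrm{id}_A$ and symmetrically $\Phi\Psi=\mathrm{id}_B$. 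Your closing caveat is a genuine and correct subtlety rather than a defect: property (2) passes to the closed unital subalgebra generated by $\iota(E)$, so ``characterize'' must be read among algebras generated by the image of $E$, with isomorphisms intertwining the embeddings — a hypothesis you rightly build into both the construction and the uniqueness argument. What the paper's citation buys is economy, since downstream it uses only the extension property (in identifying $UC(E)$ with $OA_u(E)/\mathcal{C}$); what your treatment buys is self-containedness, at the cost of the routine but nontrivial bookkeeping (finiteness of the universal norm, reduction to a set of representations) that you correctly flag as the only technical care needed.
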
 

Similarly, the algebras $UC(E)$ are ``universal'' among commutative operator algebras containing $E$ completely contractively; in particular we have:

\begin{prop}\label{P:universalSA}  Let $E$ be a finite-dimensional operator space.  
\begin{enumerate}
\item  There exists a canonical completely isometric embedding 
\[
\iota:E\to UC(E).
\]
\item  If $\sigma:E\to B(H)$ is a completely contractive map with commutative range, then there exists a unique completely contractive unital homomorphism $\hat{\sigma}:UC(E)\to B(H)$ extending $\sigma$.  
\end{enumerate}
\end{prop}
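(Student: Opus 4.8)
The plan is to handle the two assertions separately: part (2) is the universal property read directly off the definition (\ref{eqn:univ-section-normdef}) of the $UC(E)$-norm, while the complete isometry of $\iota$ in part (1) is the substantive point. Throughout I take $\iota$ to send the basis vector $e_j\in E$ to the coordinate function $z_j$, so that a matrix $A=(A_1,\dots,A_n)\in M_m(E)$ is carried to the linear polynomial $\sum_j A_j z_j$, whose value at a tuple $S$ is $\sum_j A_j\otimes S_j$.

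For part (2), set $T_j:=\sigma(e_j)$. Since $\sigma$ is linear with commutative range, the $T_j$ commute, and $\sigma=\sigma_T$ is completely contractive for $E$; hence $T$ is precisely an admissible tuple in the supremum (\ref{eqn:univ-section-normdef}). I would define $\hat\sigma$ on polynomials by evaluation, $\hat\sigma(p):=p(T)$, which is a unital algebra homomorphism because the $T_j$ commute, and the defining supremum gives $\|\hat\sigma^{(m)}(P)\|=\|P(T)\|\le\|P\|_{M_m(UC(E))}$ for every matrix of polynomials $P$, so $\hat\sigma$ is completely contractive and extends by density to all of $UC(E)$. It extends $\sigma$ since $\hat\sigma(\iota(e_j))=T_j=\sigma(e_j)$, and uniqueness is forced: any unital homomorphism agreeing with $\sigma$ on $E$ must send $z_j$ to $T_j$, hence agree with evaluation on the dense subalgebra of polynomials.

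The heart of the matter is the complete isometry in part (1), which I would prove as a pair of inequalities. The bound $\|\iota^{(m)}(A)\|\le\|A\|_{M_m(E)}$ is immediate: for any admissible $S$ one has $\sum_j A_j\otimes S_j=(\mathrm{id}_{M_m}\otimes\sigma_S)(A)$, and complete contractivity of $\sigma_S$ bounds its norm by $\|A\|_{M_m(E)}$. For the reverse inequality I would invoke the dual description (\ref{E:E_norm_tensor}), which computes $\|A\|_{M_m(E)}$ as the supremum of $\|\sum_j A_j\otimes C_j\|$ (the order of the tensor factors being immaterial) over tuples $C$ in the unit ball of $M_{m'}(E^*)$; by the definition of the dual matrix norms in Section~\ref{S:prelim}, these $C$ are exactly the completely contractive maps $\sigma_C:E\to M_{m'}$. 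The main obstacle is that such a $C$ need not commute and so is not directly admissible in (\ref{eqn:univ-section-normdef}). I would remove this obstruction by the commutification trick of replacing $C_j$ with the nilpotent block operators $S_j=\left(\begin{smallmatrix}0&C_j\\0&0\end{smallmatrix}\right)$ on $\mathbb C^{m'}\oplus\mathbb C^{m'}$: every product $S_jS_k$ vanishes, so the $S_j$ commute; a shuffle of the tensor legs gives $\|\sum_j D_j\otimes S_j\|=\|\sum_j D_j\otimes C_j\|\le\|D\|_{M_l(E)}$ for every matrix tuple $D$, so $\sigma_S$ is completely contractive for $E$; and the same computation yields $\|\sum_j A_j\otimes S_j\|=\|\sum_j A_j\otimes C_j\|$. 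Thus each $C$ in the dual unit ball is matched by an admissible commuting tuple $S$ attaining the same value, giving $\|\iota^{(m)}(A)\|\ge\|A\|_{M_m(E)}$ and hence the desired equality.
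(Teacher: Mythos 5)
Your proposal is correct, and in substance it is the paper's own argument: part (2) is read directly off the definition \eqref{eqn:univ-section-normdef}, exactly as in the paper, and part (1) is the duality statement the paper invokes. The difference is one of completeness rather than of route. The paper's proof is a two-line sketch which asserts that complete isometry of $\iota$ is ``immediate from the definition of the $UC(E)$ norms and the duality described in Section~\ref{S:prelim},'' and your commutification step supplies precisely the detail that assertion suppresses: the dual-ball tuples $C$ appearing in \eqref{E:E_norm_tensor} need not commute, so one must check that restricting the supremum to commuting tuples loses nothing. Your nilpotent device $S_j=\left(\begin{smallmatrix}0&C_j\\0&0\end{smallmatrix}\right)$ is exactly the device the paper itself deploys one proposition later, in showing that $E\to OA_u(E)/\mathcal C$ is completely isometric via $\sigma=\left(\begin{smallmatrix}0&\tau\\0&0\end{smallmatrix}\right)$ for a complete isometry $\tau$. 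One small economy available to you: rather than commutifying each dual tuple $C$ separately, apply the trick once to a single completely isometric map $\tau:E\to B(K)$; the resulting $\sigma$ has commutative range, is completely isometric, and hence $S_j=\sigma(e_j)$ is a single admissible commuting tuple with $\|\iota^{(m)}(A)\|_{UC(E)}\geq \|(\mathrm{id}_{M_m}\otimes\sigma)(A)\|=\|A\|_{M_m(E)}$, which gives the lower bound without passing through \eqref{E:E_norm_tensor} at all.
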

\begin{proof}   
Everything is more or less immediate.  For the embedding of $E$ into
$UC(E)$, since the vector space underlying $E$ is just $\mathbb C^n$
we let the map $\iota$ send the point $a=(a_1,\dots a_n)$ to the
linear polynomial $p(z)=\sum a_jz_j$.  That this embedding is
completely isometric is immediate from the definition of the $UC(E)$
norms and the duality described in Section~\ref{S:prelim}.  For the
extension property, the map $\sigma$ has the form $\sigma(a)=\sum a_j
S_j$ for commuting $S_j$'s, and thus by definition
$\hat{\sigma}(p):=p(S)$ works; uniqueness is clear since the linear
polynomials generate $\mathbb C[z_1,\dots z_n]$ as a (unital) algebra,
and $\hat{\sigma}$ extends uniquely to the completion $UC(E)$.
\end{proof}
The observations in the proof of Proposition~\ref{P:universalSA} may
be organized slightly differently.  Restricting the operator algebra
structure of $UC(E)$ to the linear polynomials, we get a completely
isometric copy of $E$.  Thus a homomorphism $\pi$ from the polynomials
into $B(K)$ is completely contractive for $UC(E)$ if and only if its
restriction to the linear polynomials is completely contractive for
the induced operator space structure.  This gives a way of detecting
whether or not a given operator algebra structure on the polynomials
agrees with some $UC(E)$.  This observation is exploited in the next
section to show that the tridisk algebra $\mathcal A(\mathbb D^3)$ is
not completely isometric to any $UC(E)$.

A routine categorical argument shows that the universal property of Proposition~\ref{P:universalSA} characterizes $UC(E)$ (up to complete isometry) among the commutative operator algebras which contain $E$ completely isometrically.  We then obtain:  
\begin{prop}  Let $E$ be a finite-dimensional operator space, $OA_u(E)$ the universal (unital) operator algebra over $E$, and $\mathcal C$ the commutator ideal of $OA_u(E)$.  Then
\begin{equation*}
UC(E) \cong OA_u(E)/\mathcal C,
\end{equation*}
completely isometrically.
\end{prop}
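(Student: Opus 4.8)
The plan is to show that $A:=OA_u(E)/\mathcal C$ satisfies the universal property of Proposition~\ref{P:universalSA}, and then to invoke the categorical characterization noted just above the proposition: since that property determines $UC(E)$ up to complete isometry among commutative operator algebras containing $E$ completely isometrically, any algebra sharing the property is completely isometrically isomorphic to $UC(E)$. First I would record that $A$ is a commutative operator algebra: the quotient of an operator algebra by a closed two-sided ideal is again an operator algebra, and because $\mathcal C$ is generated by the commutators, for any $a,b$ the images satisfy $q(a)q(b)-q(b)q(a)=q(ab-ba)=0$, so $A$ is commutative. Here $q:OA_u(E)\to A$ denotes the quotient map; set $j:=q\circ\iota$, where $\iota:E\to OA_u(E)$ is the canonical embedding of Proposition~\ref{P:oauE}.

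Next I would verify the extension property. Let $\sigma:E\to B(H)$ be completely contractive with commutative range. By Proposition~\ref{P:oauE} there is a unique completely contractive unital homomorphism $\tilde\sigma:OA_u(E)\to B(H)$ with $\tilde\sigma\circ\iota=\sigma$. Its range is the closed unital subalgebra of $B(H)$ generated by $\sigma(E)$, which is commutative; hence $\tilde\sigma$ annihilates every commutator, and being a continuous homomorphism it annihilates the closed two-sided ideal $\mathcal C$ that they generate. Therefore $\tilde\sigma$ factors as $\tilde\sigma=\hat\sigma\circ q$ for a unique completely contractive unital homomorphism $\hat\sigma:A\to B(H)$, and $\hat\sigma\circ j=\sigma$. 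Uniqueness of $\hat\sigma$ is immediate, since $j(E)$ generates $A$ as a unital operator algebra and $\hat\sigma$ is a continuous homomorphism.

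The substantive step is that $j$ is completely isometric. It is completely contractive as a composite of complete contractions, so only the inequality $\|j^{(m)}(x)\|_{M_m(A)}\ge\|x\|_{M_m(E)}$ needs proof. The matrix norms of the operator algebra $A$ are computed as the supremum of $\|\pi^{(m)}(\cdot)\|$ over its completely contractive unital representations $\pi$; and $\pi\mapsto\pi\circ q$ is a bijection between these and the completely contractive unital representations of $OA_u(E)$ that kill $\mathcal C$, which by Proposition~\ref{P:oauE} are exactly the maps $\hat\sigma$ arising from completely contractive $\sigma:E\to B(H)$ with commutative range. Since $\pi\circ j=\sigma$ for the corresponding $\sigma$, one obtains
\[
\|j^{(m)}(x)\|_{M_m(A)}=\sup\bigl\{\,\|\sigma^{(m)}(x)\|:\sigma:E\to B(H)\text{ completely contractive, commutative range}\,\bigr\}.
\]
These $\sigma$ are precisely the maps $\sigma_S$ attached to commuting tuples $S$ with $\sigma_S$ completely contractive for $E$, i.e.\ exactly the tuples defining the $UC(E)$ norm (\ref{eqn:univ-section-normdef}). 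Hence the supremum equals the $M_m(UC(E))$-norm of the linear polynomial corresponding to $x$, which in turn equals $\|x\|_{M_m(E)}$ by the complete isometry of $E\hookrightarrow UC(E)$ already established in Proposition~\ref{P:universalSA}. This yields the reverse inequality and completes the verification.

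With the universal property in hand, the categorical characterization gives $A\cong UC(E)$ completely isometrically. The hard part, and the only place where anything beyond formal manipulation enters, is the completely isometric embedding of the previous paragraph: its proof rests on the duality of Section~\ref{S:prelim} (Proposition~\ref{P:dual_op_space} together with (\ref{E:E_norm_tensor})), which is precisely what guarantees that commutative-range completely contractive maps already compute the full matrix norms of $E$. A secondary point I would be careful to check is that the quotient of an operator algebra by a closed two-sided ideal carries a genuine (completely isometric) operator-algebra structure, so that the $M_m(A)$-norms invoked above are the correct ones; this is where one appeals to the abstract characterization of operator algebras.
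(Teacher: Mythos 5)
Your proof is correct, and at the top level it follows the same skeleton as the paper's: verify that $OA_u(E)/\mathcal C$ is a commutative operator algebra containing $E$ completely isometrically and possessing the extension property, then invoke the categorical uniqueness recorded just before the proposition. The genuine difference is in how the complete isometry of $j=q\circ\iota$ is established. The paper does this with one explicit construction: given any complete isometry $\tau:E\to B(K)$, the map
\[
\sigma=\begin{pmatrix} 0 & \tau \\ 0 & 0 \end{pmatrix}
\]
is completely isometric and has commutative range (all products of elements of its range vanish); since such a $\sigma$ factors as $\hat\sigma\circ j$ with $\hat\sigma$ completely contractive, $j$ cannot decrease any matrix norm. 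You instead compute the matrix norms of the quotient as suprema over completely contractive unital representations, identify those representations with the commutative-range completely contractive maps out of $E$, recognize the resulting supremum as the $UC(E)$-norm of a linear matrix polynomial, and quote the complete isometry of $E\hookrightarrow UC(E)$ from Proposition~\ref{P:universalSA}. That is legitimate (the cited proposition precedes this one in the paper), and your route has the virtue of making the equality of the quotient norm with the $UC(E)$-norm explicit on all of $E$; its cost is the extra Blecher--Ruan--Sinclair-type machinery you rightly flag (quotients of operator algebras are operator algebras whose matrix norms are attained over completely contractive representations), and the fact that it defers, rather than proves, the substantive point. On that score your closing remark is slightly off: the duality of Section~\ref{S:prelim} (Proposition~\ref{P:dual_op_space} together with (\ref{E:E_norm_tensor})) by itself only shows that \emph{arbitrary} completely contractive maps compute the matrix norms of $E$; that \emph{commutative-range} maps already suffice is an additional fact, and it is exactly what the paper's off-diagonal trick supplies (replace any tuple $T$ by the commuting tuple $S_j=\left(\begin{smallmatrix}0 & T_j \\ 0 & 0\end{smallmatrix}\right)$, which leaves every tensor norm $\|\sum A_j\otimes S_j\|$ unchanged). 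So your argument stands as written only because Proposition~\ref{P:universalSA} is available as a black box; the paper's proof is the one that exhibits the underlying mechanism, and the trick is worth knowing since it is also what makes that earlier proposition true.
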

\begin{proof}  We begin with the observation that the map of $E$ into $OA_u(E)/\mathcal C$ given by the composition
\begin{equation*}
E\hookrightarrow OA_u(E) \to OA_u(E)/\mathcal C
\end{equation*}
is completely isometric.  (The first map is the canonical (completely isometric) embedding into $OA_u(E)$; the second is the quotient map.)  To see this, it suffices to see that the restriction of the quotient map to $E$ is completely isometric; this in turn follows from the existence of a completely isometric map $\sigma :E\to B(H)$ with commutative range.  Such a map can be obtained by taking any complete isometry $\tau:E\to B(K)$ and defining
\begin{equation*}
\sigma =\begin{pmatrix} 0 & \tau \\
                        0 & 0\end{pmatrix}.
\end{equation*}
With this canonical embedding of $E$ into the quotient in hand, it is straightforward to check that $OA_u(E)/\mathcal C$ has the universal property of Proposition~\ref{P:universalSA}, and hence is completely isometrically isomorphic to $UC(E)$.  
\end{proof}

It is shown in \cite[Chapter 6]{pisier} that the operator algebra norm on $OA(E)$ is realized by taking the supremum over just those completely contractive representations of $OA(E)$ on finite-dimensional Hilbert spaces.  It is not obvious that the same is true for $UC(E)$---the difficulty is that if $\sigma:E\to B(H)$ has commuting range and $P$ is a projection in $B(H)$, the map $P\sigma P$ need not have commuting range.   However the proof of Theorem~\ref{T:main} shows that  $UC(E)$ is indeed determined by its finite-dimensional representations:
\begin{thm}  For every matrix-valued polynomial $p$, we have
\begin{equation}\label{E:just_matrices}
{\|p\|}_{UC(E)} =\sup\|p(S)\|
\end{equation} 
where the supremum is take over all $n$-tuples of commuting \emph{matrices} for which $\sigma_S$ is completely contractive for $E$; in other words, over all commuting matrices in the unit ball of $E^*$.   
\end{thm}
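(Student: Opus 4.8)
The plan is to prove the two inequalities in (\ref{E:just_matrices}) separately. The inequality $\sup\|p(S)\| \le \|p\|_{UC(E)}$ is immediate, since every commuting tuple of matrices in the unit ball of $E^*$ is in particular a tuple $S$ with $\sigma_S$ completely contractive for $E$, so the supremum on the right is taken over a subfamily of the tuples defining $\|p\|_{UC(E)}$. For the reverse inequality, by homogeneity (replacing $p$ by $p/(\sup\|p(S)\|+\varepsilon)$ and letting $\varepsilon\to 0$) it suffices to prove the implication: \emph{if} $\|p(S)\|\le 1$ for every commuting tuple of matrices $S$ in the unit ball of $E^*$, \emph{then} $\|p(S)\|\le 1$ for every Hilbert-space tuple with $\sigma_S$ completely contractive for $E$, i.e.\ $\|p\|_{UC(E)}\le 1$.

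To establish this implication I would apply Theorem~\ref{T:main} to the operator space $E^*$ (legitimate, since finite-dimensional operator spaces are reflexive). For this application statement (3) is precisely the von Neumann inequality over all Hilbert-space tuples $S$ with $\sigma_S$ completely contractive for $E$, statement (1) is a corresponding Agler-Nevanlinna factorization, and the conclusion we want is exactly that (3) holds. The hypothesis we are granted, however, is only the \emph{matrix} version of (3). So the whole game is to show that the proof of the implication $(3)\Rightarrow(1)$ goes through using nothing more than the matrix version of (3).

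This is where that proof does the work for us. Tracing the argument, the von Neumann hypothesis (3) is invoked in exactly one spot: after fixing a finite set $\Lambda$ and assuming the target matrix $P$ lies outside the cone $\mathcal C$, one separates $P$ from $\mathcal C$ by a functional $L$, builds the pre-Hilbert space $\mathcal H$ and the operators $S_k$, passes to the quotient $\mathcal H^\prime$, and concludes that $1-p(S)^*p(S)$ is not positive there --- contradicting (3). But $\mathcal H^\prime$ is \emph{finite-dimensional}: the remark following (\ref{E:conedef}) shows the factorizations defining $\mathcal C$ take place on a fixed space of dimension at most $2kN$, so $\mathcal H$ (functions on the finite set $\Lambda$ valued in a fixed finite-dimensional space) is finite-dimensional, and $\mathcal H^\prime$ is a quotient of it. Hence the tuple $S$ produced there is a tuple of matrices, and Claim~2 together with Proposition~\ref{P:dual_op_space} places it in the unit ball of $E^*$. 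The contradiction is therefore already a contradiction with the \emph{matrix} version of (3). Thus the matrix hypothesis alone forces $P\in\mathcal C$ for every finite $\Lambda$ --- that is, the factorization (\ref{E:factorization}) on every finite subset of the ball.

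From here the remainder of the $(3)\Rightarrow(1)$ proof --- the Kurosh compactness step and the final appeal to Lemma~\ref{L:mad-sam-main} --- upgrades the finite-subset factorizations to the factorization (\ref{E:factorization}) on the entire ball, giving statement (1). Since the implications $(1)\Rightarrow(2)\Rightarrow(3)$ carry no finite-dimensionality restriction, they then deliver the full von Neumann inequality $\|p(S)\|\le 1$ for \emph{all} Hilbert-space tuples with $\sigma_S$ completely contractive for $E$, which is the reverse inequality we wanted. I expect the only delicate point to be the bookkeeping around the role-reversal $E\leftrightarrow E^*$: one must confirm that the finite-dimensionality of $\mathcal H^\prime$ and the membership of $S$ in the correct unit ball survive the interchange of the space and its dual. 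These, however, are exactly the features already verified inside the proof of Theorem~\ref{T:main}, so no genuinely new estimate is required.
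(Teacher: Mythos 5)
Your proposal is correct and is essentially the paper's own proof: the paper likewise observes that the tuple $S$ constructed in the (3)$\Rightarrow$(1) argument of Theorem~\ref{T:main} acts on the finite-dimensional space $\mathcal H$, so the matrix von Neumann inequality alone forces the Nevanlinna factorization on finite sets (hence, via Kurosh and Lemma~\ref{L:mad-sam-main}, statement (1)), and then (1)$\Rightarrow$(2)$\Rightarrow$(3) yields the inequality over all Hilbert-space tuples. Your explicit bookkeeping of the $E\leftrightarrow E^*$ role reversal and of why $\mathcal H$ is finite-dimensional only spells out what the paper's shorter proof invokes implicitly.
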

\begin{proof}    This is really an immediate consequence of the fact that the operators $S_k$ constructed in the proof of the ``(3) implies (1)'' implication of Theorem~\ref{T:main} act on a finite-dimensional Hilbert space.  More precisely, (recalling the terminology and notation used in the proof of Theorem~\ref{T:main}), if $p$ is given and does \emph{not} admit a Nevanlinna factorization, then there exists a finite set $\Lambda\subset \Omega$ for which $1-p(z)p(w)^*$ does not belong to the cone $\mathcal C$.  In this setting, the proof of the (3)$\implies$(1) implication produces an $n$-tuple of operators $S=(S_1,\dots S_n)$ on the finite-dimensional Hilbert space $\mathcal H$ for which $I-p(S)p(S)^*$ is non-positive.  The contrapositive of this statement is that if $\|p(S)\|\leq 1$ for all admissible \emph{matrices} $S$, then $p$ admits a Nevanlinna factorization, and hence (\ref{E:just_matrices}) holds.
\end{proof}

Another useful fact about $OA(E)$ is that it ``commutes'' with Calderon interpolation \cite[Section 2.7]{pisier}, that is, for any pair of compatible operator spaces $E_0, E_1$, 
\begin{equation*}
OA(E_\theta)={[OA(E_0), OA(E_1) ]}_\theta
\end{equation*}
completely isometrically.  We do not know if the analogous statement is true for $UC(E)$.
\begin{ques}Is it the case that
\begin{equation*}
UC(E_\theta) \cong {[UC(E_0), UC(E_1)]}_\theta
\end{equation*}
completely isometrically?
\end{ques}
\section{Examples}\label{S:examples}

Lacking a better name, in what follows we shall refer to the operator algebra norms described by Theorem~\ref{T:main} generically as {\em $UC$-norms}.  One natural class of examples in the present context are those coming from the minimal and maximal operator space structures over the given Banach space $V$.  We briefly recall the definitions.  To define $MIN(V)$, we observe that the duality between $V$ and $V^*$ induces a natural map $e$ from $V$ into the space of continuous functions on the unit ball of $V^*$ (denoted $C(V^*_1)$), by sending $z$ to the functional it induces on $V^*$:  
\begin{equation*}
z\to \langle \cdot, z\rangle
\end{equation*}
By the Hahn-Banach theorem, this map is isometric if $C(V^*_1)$ is equipped with the supremum norm.  Since this norm makes $C(V^*_1)$ into a C*-algebra, the embedding thus defines an operator space structure on $V$, called the \emph{minimal operator space} over $V$, and is denoted $MIN(V)$.  The operator space $MAX(V)$ is defined by the matrix norms
\begin{equation*}
\|(v_{ij})\|_N:=\sup_\varphi \| (\varphi(v_{ij})\|_{B(H^N)}
\end{equation*} 
where the supremum is taken over all contractive linear maps from $V$ into $B(H)$.  In other words, every contractive map out of $V$ is completely contractive for  $MAX(V)$.  On the other hand, a map is completely contractive for $MIN(V)$ if and only if it is completely contractive for every operator space structure over $V$.  It is well-known (and not too hard to prove) that $MIN(V)^*=MAX(V^*)$ and $MAX(V^*)=MIN(V)$.  

It follows that for each $V$, there is a unique minimal and maximal $UC$-norm associated to the domain $\Omega=ball(V)$.  We denote these norms $\|p\|_{MIN(\Omega)}$ and $\|p\|_{MAX(\Omega)}$ respectively.  The largest norm has the smallest unit ball; and hence allows the fewest completely contractive maps to appear in the Nevanlinna factorization.  This happens when we choose $E=MIN(V)$ in Theorem~\ref{T:main}, so the maximal $UC$-norm over $\Omega=ball(V)$ is obtained by taking the supremum over all commuting $T$ such that the map $\sigma_T$ is completely contractive for $MIN(V)^*=MAX(V^*)$. 
For example, if $\Omega$ is the unit polydisk $\mathbb D^n$, then $V=\ell^\infty_n$ and $V^*=\ell^1_n$.  Now, $\sigma_T$ is completely contractive for $MAX(\ell^1_n)$ if and only if it is contractive, that is if and only if
\begin{equation*}
\|\sum_{j=1}^n z_j T_j\| \leq \sum_{j=1}^n|z_j|.
\end{equation*}
for all $z\in\mathbb C^n$.  Clearly this happens if and only if each $T_j$ is contractive, so by the von Neumann inequality of Theorem~\ref{T:main} we see that the maximal $UC$-norm over the polydisk is equal to the universal norm (the supremum over all commuting contractions) discussed in the introduction.  
\subsection{$MIN(\ell^1_n)$}
In fact, the above considerations allow us to observe a stronger consequence of the Kaiser-Varopoulos counterexample to the three-variable von Neumann inequality.  The original example, interpreted in the present setting, shows that $\|p\|_{MAX(\mathbb D^3)}>\|p\|_\infty$ on the polydisk $\mathbb D^3$.  In fact their example shows that $\|p\|_{MIN(\mathbb D^3)}>\|p\|_\infty$.  More precisely, the triple commuting contractions $T$ of the Kaijser-Varopoulos example \cite{var} are such that $\sigma_T$ is completely contractive for $MIN(\ell^1)$, and hence $\|p\|_{MIN(\mathbb D^3)} \geq \|p(T)\| >\|p\|_\infty$.    It should be stressed that this is a particular feature of this example and not true generically of counterexamples to the three-variable von Neumann inequality; in particular it is not true of the $8\times 8$ example produced by Crabb and Davie \cite{crabb-davie}.

\begin{prop}
The Kaiser-Varopoulos contractions are completely contractive for $\text{MIN}(\ell^1_n)$.  
\end{prop}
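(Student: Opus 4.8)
The plan is to compute both sides of the complete-contractivity inequality explicitly, exploiting the grading of the Kaijser--Varopoulos operators. Recall that these act on $\mathbb C^5=\mathbb C e_0\oplus\text{span}(e_1,e_2,e_3)\oplus\mathbb C e_4$ by $T_je_0=e_j$, $T_je_k=a_{kj}e_4$ and $T_je_4=0$, where $(a_{kj})$ is the symmetric matrix built into the example; in particular every $T_j$ raises the grading by exactly one, so any product of three of them vanishes. By the definition of $MIN(\ell^1_3)$ as the embedding of $\ell^1_3$ into $C$ of the dual ball, and since $w\mapsto\sum_jw_jA_j$ is affine, the relevant matrix norm is $\|(A_1,A_2,A_3)\|_{MIN}=\sup_{w\in\mathbb T^3}\|\sum_jw_jA_j\|$. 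Thus $\sigma_T$ is completely contractive for $MIN(\ell^1_3)$ precisely when
\[
\left\|\sum_{j=1}^3 A_j\otimes T_j\right\|\le \sup_{w\in\mathbb T^3}\left\|\sum_{j=1}^3 w_jA_j\right\|=:\sup_{w}\|R(w)\|
\]
for every triple of $m\times m$ matrices $A_j$ and every $m$, where $R(w):=\sum_j w_jA_j$.

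First I would reduce the left-hand side using the grading. With respect to $\mathcal H_0\oplus\mathcal H_1\oplus\mathcal H_2 := (\mathbb C^m\otimes\mathbb Ce_0)\oplus(\mathbb C^m\otimes\text{span}(e_1,e_2,e_3))\oplus(\mathbb C^m\otimes\mathbb Ce_4)$, the operator $M:=\sum_jA_j\otimes T_j$ is block strictly lower triangular with only two nonzero blocks: a column block $M_{10}\colon\mathcal H_0\to\mathcal H_1$ sending $x\otimes e_0\mapsto\sum_jA_jx\otimes e_j$, and a row block $M_{21}\colon\mathcal H_1\to\mathcal H_2$ sending $\sum_kx_k\otimes e_k\mapsto\sum_k\Lambda_kx_k\otimes e_4$ with $\Lambda_k=\sum_ja_{kj}A_j$. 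Since these blocks have orthogonal domains and orthogonal ranges, $\|M\|=\max(\|M_{10}\|,\|M_{21}\|)$, so it suffices to bound each block by $\sup_w\|R(w)\|$.

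The column block is handled by averaging over the torus: $\|M_{10}\|^2=\|\sum_jA_j^*A_j\|$, and since $\int_{\mathbb T^3}R(w)^*R(w)\,dw=\sum_jA_j^*A_j$ (the cross terms integrate to zero), we get $\|M_{10}\|^2\le\sup_w\|R(w)\|^2$. The row block is the crux. A direct computation gives $\|M_{21}\|^2=\|\sum_k\Lambda_k\Lambda_k^*\|=\|\sum_{j,l}P_{jl}A_jA_l^*\|$, where $P:=G^*G$ is the (real symmetric) Gram matrix of the columns of $G=(a_{kj})$; contractivity of the $T_j$ forces $P$ to have unit diagonal, and for the Kaijser--Varopoulos matrix its off-diagonal entries are $\pm\tfrac12$. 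The point is that this particular $P$ is realized as a torus moment matrix: there is a probability measure $\mu$ on $\mathbb T^3$ with $\int w_j\bar w_k\,d\mu=P_{jk}$ for all $j,k$. Indeed, in the $-\tfrac12$ case one may take $\mu$ uniform over the six coordinate permutations of $(1,\omega,\omega^2)$ with $\omega=e^{2\pi i/3}$ (a short computation gives $\int w_j\bar w_k\,d\mu=\tfrac12(\omega+\omega^2)=-\tfrac12$ for $j\ne k$), and in the $+\tfrac12$ case a suitable mixture of the two-point diagonal measure with discrete Haar measure on $\{\pm1\}^3$ works. Granting such a $\mu$, one has $\sum_{j,l}P_{jl}A_jA_l^*=\int R(w)R(w)^*\,d\mu$, whence $\|M_{21}\|^2\le\sup_w\|R(w)\|^2$, completing the proof.

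The main obstacle is exactly the realization of $P$ as a torus moment matrix in the row-block estimate; everything else is routine bookkeeping. This is also the feature special to the Kaijser--Varopoulos example: the crude triangle-inequality bound on $\|M_{21}\|$ only yields a factor $\sqrt2$, and it is the moment-matrix representation of the specific Gram matrix $P$ (available here because its off-diagonal entries are $\pm\tfrac12$) that removes this factor. For a generic counterexample to the three-variable von Neumann inequality the associated Gram matrix need not be a torus moment matrix, which is precisely why the conclusion fails for the Crabb--Davie example.
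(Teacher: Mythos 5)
Your overall architecture is sound and in fact matches the paper's proof: the grading decomposition of $M=\sum_j A_j\otimes T_j$ into a column block $M_{10}$ and a row block $M_{21}$ with $\|M\|=\max(\|M_{10}\|,\|M_{21}\|)$, and the torus-averaging argument for $\|M_{10}\|$, are exactly right. The gap is in the step you yourself identify as the crux. For the Kaijser--Varopoulos vectors $v_1=\tfrac{1}{\sqrt3}(-1,1,1)$, $v_2=\tfrac{1}{\sqrt3}(1,-1,1)$, $v_3=\tfrac{1}{\sqrt3}(1,1,-1)$, the Gram matrix $P=G^{T}G$ has off-diagonal entries $\langle v_j,v_l\rangle=\tfrac13(-1-1+1)=-\tfrac13$, not $\pm\tfrac12$; indeed no $\tfrac{1}{\sqrt3}$-normalized sign matrix can produce $\pm\tfrac12$, since its Gram entries are $\tfrac13$ times an odd integer. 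Your six-permutation measure does have moments $-\tfrac12$, as you computed, but that is precisely why it does \emph{not} represent $P$. And there is no room to substitute domination for equality: the moment matrix of any probability measure on $\mathbb{T}^3$ has unit diagonal, so if it dominates $P$ in the positive semidefinite order it must equal $P$. As written, the identity $\sum_k\Lambda_k\Lambda_k^*=\int R(w)R(w)^*\,d\mu$ fails for your $\mu$, and the row-block bound collapses. The repair inside your own method is easy: $P$ is the moment matrix of $\tfrac23\mu_{\mathrm{perm}}+\tfrac13 m$, where $m$ is product Haar measure on $\mathbb{T}^3$ (off-diagonal moments $\tfrac23\cdot(-\tfrac12)+\tfrac13\cdot 0=-\tfrac13$).

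You should also know that the moment-matrix machinery is unnecessary, and your closing assessment is inverted. Each $\Lambda_k=\tfrac{1}{\sqrt3}(\pm A_1\pm A_2\pm A_3)$ has norm at most $\tfrac{1}{\sqrt3}$, by evaluating the hypothesis $\sup_{w\in\mathbb{T}^3}\|R(w)\|\le 1$ at the sign vectors $w=(\pm1,\pm1,\pm1)$; hence $\|M_{21}\|^2=\bigl\|\sum_k\Lambda_k\Lambda_k^*\bigr\|\le\sum_k\|\Lambda_k\|^2\le 3\cdot\tfrac13=1$. This ``crude'' bound gives exactly $1$, not $\sqrt2$, and it is precisely the paper's argument. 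What is special about the Kaijser--Varopoulos example is therefore not a hidden moment structure of $P$, but the simple fact that the rows of $G$ are $\tfrac{1}{\sqrt3}$ times points of the torus, so the MIN hypothesis bounds each $\Lambda_k$ with a constant whose squares sum to $1$.
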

\begin{proof}
Let $e_1, \dots e_5$ denote the standard basis of $\mathbb C^5$.  Consider the unit vectors
\begin{align*}
v_1 &=\frac{1}{\sqrt{3}}(-e_2+e_3+e_4)\\
v_2 &=\frac{1}{\sqrt{3}}(e_2-e_3+e_4)\\
v_3 &=\frac{1}{\sqrt{3}}(e_2+e_3-e_4)\\
\end{align*}
The Kaijser-Varopoulos contractions are the commuting $5\times 5$ matrices $T_1, T_2, T_3$ defined by
$$
T_j = e_{j+1} \otimes e_1 +e_5\otimes v_j
$$
To prove the proposition we must show that if $A_1, A_2, A_3$ are matrices which satisfy
\begin{equation}\label{E:minell1def}
\|z_1 A_1 +z_2 A_2 +z_3 A_3\|\leq 1
\end{equation}
for all $z\in\mathbb D^n$, then $\|\sum A_j \otimes T_j\|\leq 1$.  Computing, we find
\begin{equation}\label{E:atensort}
A_1\otimes T_1 + A_2\otimes T_2+ A_3\otimes T_3 =\begin{pmatrix}
0 & 0 & 0 & 0 & 0 \\
A_1 & 0 & 0 & 0 & 0 \\
A_2 & 0 & 0 & 0 & 0 \\
A_3 & 0 & 0 & 0 & 0 \\
0 & B_1 & B_2 & B_3 & 0 \end{pmatrix}
\end{equation}
where 
\begin{align*}
B_1 &=\frac{1}{\sqrt{3}}(-A_1+A_2+A_3)\\
B_2 &=\frac{1}{\sqrt{3}}(A_1-A_2+A_3)\\
B_3 &=\frac{1}{\sqrt{3}}(A_1+A_2-A_3)\\
\end{align*}
The norm of the matrix (\ref{E:atensort}) is equal to the maximum of the norms of the first column and the last row.  By (\ref{E:minell1def}), we have $\|\pm A_1\pm A_2\pm A_3\|\leq 1$ for any choices of signs, so the last row of (\ref{E:atensort}) has norm at most $1$.  To say that the first column has norm at most 1 amounts to saying that 
\begin{equation}\label{E:column-contraction}
I-\sum_j A_j^* A_j \geq 0
\end{equation}
or, in fancier language, the identity map of $\mathbb C^n$ is completely contractive from $\text{MIN}(\ell^1_n)$ to the column operator space $C_n$.  This may be seen by averaging:  by (\ref{E:minell1def}), the matrix valued function
$$
I -\sum z_i\overline{z_j} A_j^* A_i 
$$
is positive semidefinite on $\mathbb T^n$.  Integrating against Lebesgue measure on $\mathbb T^n$ gives (\ref{E:column-contraction}).
\end{proof}

\subsection{$MIN(\ell^2_n)$}
We next consider the unit ball of $\mathbb C^n$, $n\geq 2$, with the $\ell^2$ norm.  Recall that the {\em row operator space} $R_n$ and {\em column operator space} $C_n$ are defined by embedding $\mathbb C^n$ into $M_n(\mathbb C)$ ``along the first row'' or ``along the first column'' respectively.  We have $R_n^*=C_n$ completely isometrically, and thus a polynomial belongs to the unit ball of $UC(C_n)$ if and only if it is contractive when evaluated on every row contraction, that is, if and only if it is a contractive multiplier of the {\em Drury-Arveson space}; this fact is Arveson's von Neumann inequality for row contractions \cite{arv}.

It is known in general that $\|p\|_{UC(C_n)} >\|p\|_\infty$ (here $\|p\|_\infty$ is the sup norm over $\mathbb B^n$); probably the simplest example is $p(z_1,z_2)=2z_1z_2$.  The next example shows that the strict inequality persists if we replace the sup norm with the $MIN(\ell^2_2)$ norm.
\begin{prop}\label{P:l2example}
Let $p(z_1,z_2)=2z_1z_2$.  Then $\|p\|_{MIN(\mathbb B^2)} =\|p\|_\infty =1$ (so in particular $\|p\|_{UC(C_2)}>\|p\|_{MIN(\mathbb B^2)}$.
\end{prop}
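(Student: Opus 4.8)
The plan is to establish $\|p\|_\infty=1$ and $\|p\|_{MIN(\mathbb B^2)}=1$ separately, the first being elementary and the second carrying the content. For the sup norm, the arithmetic--geometric mean inequality gives $2|z_1z_2|\le|z_1|^2+|z_2|^2=\|z\|_2^2\le1$ for $z\in\mathbb B^2$, the bound being sharp (approached at $|z_1|=|z_2|=1/\sqrt2$), so $\|p\|_\infty=1$. Since every $UC$-norm dominates the supremum norm over $\Omega$, this already gives $\|p\|_{MIN(\mathbb B^2)}\ge1$, and only the reverse inequality remains. Recall from the discussion of minimal $UC$-norms that $\|p\|_{MIN(\mathbb B^2)}$ is the supremum of $\|p(S)\|$ over commuting pairs $S=(S_1,S_2)$ for which $\sigma_S$ is completely contractive for $MIN(V^*)=MIN(\ell^2_2)$, using that $\ell^2_2$ is self-dual under the symmetric pairing. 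Thus the entire problem reduces to the single estimate $\|p(S)\|=\|2S_1S_2\|\le1$ for every \emph{commuting} pair $S$ with $\sigma_S$ completely contractive for $MIN(\ell^2_2)$.

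To use complete contractivity for the minimal structure, I would pass through the commutative $C^*$-algebra in which $MIN(\ell^2_2)$ sits. By definition $MIN(\ell^2_2)$ embeds completely isometrically in $C(\overline{\mathbb B^2})$ via $z\mapsto\langle\cdot,z\rangle$, so that $\sigma_S(e_j)=S_j$ is the image of the $j$-th coordinate function $\hat\mu_j$. By the Wittstock--Arveson extension theorem, $\sigma_S$ extends to a completely contractive map $\Phi:C(\overline{\mathbb B^2})\to B(K)$, and the Wittstock--Paulsen representation of completely bounded maps out of a $C^*$-algebra furnishes a $*$-representation $\pi$ and contractions $V_1,V_2$ with $\Phi(f)=V_1^*\pi(f)V_2$. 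Since $\pi$ represents the commutative algebra $C(\overline{\mathbb B^2})$, the operators $M_j:=\pi(\hat\mu_j)$ are commuting normals whose joint spectrum lies in $\overline{\mathbb B^2}$, so that
\begin{equation*}
S_j=V_1^*M_jV_2\ (j=1,2),\qquad \sum_j M_j^*M_j\le I,
\end{equation*}
the last inequality because $\sum_j|\mu_j|^2\le1$ on the joint spectrum.

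Now commutativity of $S$ enters decisively. Writing $W:=V_2V_1^*$ (a contraction) and symmetrizing,
\begin{equation*}
2S_1S_2=S_1S_2+S_2S_1=V_1^*\big(M_1WM_2+M_2WM_1\big)V_2,
\end{equation*}
so it suffices to bound the middle factor by $1$. Factoring it as a row times a column,
\begin{equation*}
M_1WM_2+M_2WM_1=\begin{pmatrix}M_1 & M_2\end{pmatrix}(I_2\otimes W)\begin{pmatrix}M_2\\ M_1\end{pmatrix},
\end{equation*}
and using that the row and column each have norm $\big\|\sum_jM_j^*M_j\big\|^{1/2}\le1$ (here normality of the $M_j$ equates $M_jM_j^*$ with $M_j^*M_j$) together with $\|W\|\le1$, gives $\|M_1WM_2+M_2WM_1\|\le1$, hence $\|2S_1S_2\|\le1$. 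The step I expect to be the main obstacle is precisely this middle estimate: for a general completely contractive (as opposed to completely positive) map one cannot take $V_1=V_2$, so an uncontrolled contraction $W$ is sandwiched between the normals and the naive bound is only $2$. It is the combination of the symmetrization afforded by commutativity and the $\ell^2$-ball constraint $\sum_j|\mu_j|^2\le1$ (which makes both the row and the column contractive) that forces the bound $1$; over the polydisk the coordinatewise constraint $|\mu_j|\le1$ fails to control these norms, consistent with the failure of the analogous inequality there.

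Finally, the parenthetical strict inequality $\|p\|_{UC(C_2)}>\|p\|_{MIN(\mathbb B^2)}$ is immediate: since $\|p\|_{UC(C_2)}$ equals the multiplier norm of $p$ on the Drury--Arveson space, applying $M_p$ to the constant function $1$ and using $\|z_1z_2\|^2=\tfrac12$ there yields $\|p\|_{UC(C_2)}\ge 2\cdot\tfrac{1}{\sqrt2}=\sqrt2>1=\|p\|_{MIN(\mathbb B^2)}$.
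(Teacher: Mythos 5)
Your proposal is correct, but it takes a genuinely different route from the paper's. The paper proves $\|p\|_{MIN(\mathbb B^2)}\le 1$ by verifying condition (1) of Theorem~\ref{T:main}: it writes down an explicit contractive map $\sigma:\ell^2_2\to B(\ell^2_6)$ (automatically completely contractive for $MAX(\ell^2_2)$, whose dual is $MIN(\ell^2_2)$) together with an explicit row function $F(z)=(1,0,0,0,z_1,z_2)$, and checks the algebraic identity $1-4z_1z_2\overline{w_1}\,\overline{w_2}=F(z)\bigl(I-\sigma(z)\sigma(w)^*\bigr)F(w)^*$, so that the easy implications $(1)\implies(2)\implies(3)$ of Theorem~\ref{T:main} deliver the von Neumann inequality. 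You instead prove condition (3) directly, with no appeal to Theorem~\ref{T:main} at all: given a commuting pair $S$ with $\sigma_S$ completely contractive for $MIN(\ell^2_2)$, you extend through the commutative C*-algebra $C(\overline{\mathbb B^2})$ by Arveson--Wittstock, invoke the Wittstock--Paulsen representation $S_j=V_1^*M_jV_2$ with $M_j$ commuting normals satisfying $\sum_j M_j^*M_j\le I$, and then exploit commutativity to symmetrize, $2S_1S_2=S_1S_2+S_2S_1=V_1^*(M_1WM_2+M_2WM_1)V_2$ with $W=V_2V_1^*$, bounding the middle factor by $1$ via the row--times--column factorization (where normality lets the row norm be controlled by $\|\sum_j M_j^*M_j\|^{1/2}$). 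All steps check out, including the reduction of $\|\cdot\|_{MIN(\mathbb B^2)}$ to tuples completely contractive for $MIN(\ell^2_2)$ (self-duality of $\ell^2_2$ under the symmetric pairing) and the closing estimate $\|p\|_{UC(C_2)}\ge\|2z_1z_2\|_{H^2_2}=\sqrt2$. What each approach buys: yours is conceptual and self-contained modulo standard cb-map theory, and it makes visible exactly where commutativity and the $\ell^2$-ball constraint force the bound (and why the analogous bound fails over the polydisk); the paper's is elementary --- a single finite-dimensional identity to verify --- and produces the factorization object itself, illustrating the machinery the paper is built around. One could even say your argument proves slightly more in spirit, since it gives a uniform operator-theoretic reason rather than a certificate special to this polynomial, though the paper notes its certificate method extends to $n^{n/2}z_1\cdots z_n$ and $z_1^2+\cdots+z_n^2$ as well.
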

\begin{proof}
By Theorem~\ref{T:main} and the discussion at the beginning of this section, it suffices to exhibit a contractive map $\sigma:\ell^2_2 \to B(H)$ and a holomorphic function $F:\mathbb B^2\to H$ such that
\begin{equation}\label{E:2zwfactor}
1-4z_1z_2\overline{w_1}\overline{w_2} =F(z)(1-\sigma(z)\sigma(w)^*)F((w)^*.
\end{equation}
To do this , take $H=\mathbb \ell^2_6$; define
\begin{equation}
\sigma(z_1,z_2)= \begin{pmatrix} 0 & z_1 & z_ 2 & 0 & 0 & 0 \\
                               z_2 & 0 & 0 & 0 & 0 & 0 \\
			       z_1 & 0 & 0 & 0 & 0 & 0 \\
			       0 & 0 & 0 & 0 & z_1 & z_2 \\
			       0 & 0 & 0 & z_2 & 0 & 0 \\
			       0 & 0 & 0 & z_1 & 0 & 0 \end{pmatrix}
\end{equation}
and 
\begin{equation}
F(z_1, z_2)= \begin{pmatrix} 1 & 0 & 0 & 0 & z_1 & z_2 \end{pmatrix}
\end{equation}
Clearly $\|\sigma(z_1,z_2)\|=|z_1|^2+|z_2|^2$, and one may then check that (\ref{E:2zwfactor}) holds.
\end{proof}
Actually, what is most interesting about this example is not that $\|p\|_{R_2}>\|p\|_{MIN(\mathbb B^2)}$ but that $\|p\|_{MIN(\mathbb B^2)}=\|p\|_\infty$.  Similar factorizations can be constructed in higher dimensions to show that $\|p\|_{MIN(\mathbb B^n)}=1$ for the polynomials
\begin{equation}
p(z_1, \dots z_n) = n^{n/2} z_1z_2\cdots z_n, \quad p(z_1, \dots z_n) =z_1^2 +\cdots +z_n^2.
\end{equation}
This is interesting because these are polynomials satisfying $\|p\|_\infty=1$ that are in some sense ``large''; in particular their Cayley transforms are extreme points of the space of holomorphic functions with positive real part in $\mathbb B^n$ \cite[Section 19.2]{Rud}.  It is then natural to raise the following question, which seems quite difficult:
\begin{ques}
Is it the case that $\|p\|_{MIN(\mathbb B^n)}=\|p\|_\infty$ for all polynomials $p$?
\end{ques}  
In the language of \cite[Chapter 5]{paulsen}, the algebra of polynomials in $n$ variables with the operator algebra norm defined by taking the supremum over all commuting contractions is denoted $(\mathcal P_n, \|\cdot\|_u)$.  Paulsen also considers the algebras $\mathcal A(\mathbb D^n)$ and $MAXA(\mathcal A(\mathbb D^n)$.  The former is the operator algebra determined by the supremum norm, the latter is obtained by taking the supremum over all commuting contractions which satisfy von Neumann's inequality.  In our notation (up to taking closures) the algebra $(\mathcal P_n,\|\cdot\|_u)$ is $UC(MAX(\ell^1_n))$.  The above example shows that the norm on $UC(MIN(\ell^1_n))$ strictly dominates the sup norm when $n\geq 3$; it follows that none of the algebras 
\begin{gather*}
UC(MAX(\ell^1_n)), \\
UC(MIN(\ell^1_n)), \\
MAXA(\mathcal A(\mathbb D^n),\\
\mathcal A(\mathbb D^n)
\end{gather*}
are completely isometrically isomorphic to each other when $n\geq 3$.  However it is an open problem to determine if any of these are pairwise completely boundedly isomorphic.  By definition chasing, the identity map on polynomials induces complete contractions
\begin{equation*}
UC(MAX(\ell^1_n)) \to UC(MIN(\ell^1_n)) \to \mathcal A(\mathbb D^n)
\end{equation*}
and 
\begin{equation*}
UC(MAX(\ell^1_n)) \to MAXA(\mathcal A(\mathbb D^n) \to \mathcal A(\mathbb D^n)
\end{equation*}
The relationship between $UC(MIN(\ell^1_n))$ and $MAXA(\mathcal A(\mathbb D^n)$ is less clear; each possesses completely contractive maps into $B(H)$ which are not completely contractive for the other.  

%We will resist the temptation to consider the operator algebra
%\begin{equation*}
%MAXA(UC(MIN(\ell^1_n))).
%\end{equation*}

\section{Further results}

One may view the presence of only polynomials in Theorem~\ref{T:main}
as too restrictive, but the statement admits a simple modification to
make it valid for arbitrary analytic functions on $\Omega$.  All that
is required is to restrict the von Neumann inequality to strictly
completely contractive tuples $S$; that is, $S$ for which
$\|\sigma_S\|_{cb}=r<1.$  It is not hard to see that this condition
implies that the Taylor spectrum of $S$ lies in the closure of
$r\Omega$, and hence $f(S)$ is a well-defined, bounded operator for
any $f$ holomorphic in $\Omega$.  We then have:

\begin{thm}\label{T:main_redux}  Let $V$ be an $n$-dimensional Banach space, $E$ an operator space structure over $V$, and $\Omega=\text{ball}(V)\subset \mathbb C^n$.  For every function $f$ holomorphic in $\Omega$, the following are equivalent:

\begin{itemize}
\item[{\bf 1)}] {\textbf{\emph{Agler-Nevanlinna factorization.}}}
  There exists a Hilbert space $K$, a completely contractive map
  $\psi:V\to B(K)$, and an analytic function $F:\Omega \to B(K,\mathbb
  C^N)$ such that
\begin{equation}\label{E:factorization_redux}
1-f(z)f(w)^* = F(z)\left[ I_K -\sigma(z)\sigma(w)^* \right]F(w)^*
\end{equation}

\item[{\bf 2)}] {\textbf{\emph{Transfer function realization.}}}
  There exists a Hilbert space $K^\prime$, a unitary transformation
  $U: K^\prime\oplus \mathbb C^N \to K^\prime\oplus \mathbb C^N$ of
  the form
\begin{equation}
\begin{array}{cc}
		\	& 	\begin{array}{cc}
		 		K^\prime & \mathbb C^N
		 		\end{array}\\
		\begin{array}{c}
			K^\prime \\
			\mathbb C^N
		\end{array}  & 	\left( \begin{array}{ll}
						A & B \\
						C & D
				       \end{array} \right)
	\end{array}
\end{equation}
and a completely contractive map $\sigma:V\to B(K^\prime)$ so that
\begin{equation}\label{E:transfer_redux}
f(z) = D + C(I-\sigma(z)A)^{-1}\sigma(z)B.
\end{equation}
\item[{\bf 3)}]  {\textbf{\emph{von Neumann inequality.}}}  If $S$ is a commuting $n$-tuple in $B(K)$ and $\sigma_S$ is {\em strictly} completely contractive for $E^*$ (that is, $\|\sigma_S\|_{cb}<1$), then
\begin{equation}\label{E:vonneumann_redux}
{\|f(S)\|}_{M_N\otimes B(K)} \leq 1.
\end{equation}
\end{itemize}
\end{thm}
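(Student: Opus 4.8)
The plan is to reduce the holomorphic statement to the polynomial version already established in Theorem~\ref{T:main} by an approximation argument, exploiting the strictness built into item (3). The structure of the three-way equivalence is identical to the polynomial case, so I would recycle as much machinery as possible and isolate exactly where holomorphicity (as opposed to polynomiality) forces new work.

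First I would dispose of the easy implications. The proof that (1) implies (2) is verbatim the lurking-isometry argument from the polynomial case: equation~\eqref{E:factorization_redux} has exactly the same form as~\eqref{E:factorization}, and the rearrangement, construction of the subspaces $\mathcal M,\mathcal N$, extension to a unitary colligation, and solution of the resulting linear system never used that $p$ was a polynomial—only that $\|\sigma(z)\| < 1$ on $\Omega$, which still holds. For (2) implies (3), the transfer-function identity $f(z) = D + C(I-\sigma(z)A)^{-1}\sigma(z)B$ again yields, for a strictly completely contractive commuting tuple $S$, the operator identity via the same power-series expansion and the same Redheffer-type computation showing $I - f(S)^*f(S) = \tilde B^*(I-\tilde A X)^{-1*}(I-X^*X)(I-\tilde A X)^{-1}\tilde B \ge 0$, where now $X = \langle S, T\rangle$ satisfies $\|X\| < 1$ by Proposition~\ref{P:dual_op_space} applied to the strictly contractive $\sigma_S$. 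Here the strictness of $\sigma_S$ guarantees convergence of the series and that $f(S)$ is a well-defined bounded operator, since the Taylor spectrum of $S$ lies in the closure of $r\Omega$ for some $r<1$.

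The substantive implication is (3) implies (1). The key device is to approximate $f$ by its truncated Taylor (or Taylor–Fejér) partial sums. I would fix a sequence $p_m$ of polynomials converging to $f$ uniformly on compact subsets of $\Omega$ (for instance, dilates $f_r(z) = f(rz)$ expanded and truncated, or Cesàro means). The hypothesis (3) on $f$, restricted to tuples $S$ with $\|\sigma_S\|_{cb} = \rho < 1$, says $\|f(S)\| \le 1$; since $f$ is a locally uniform limit of the $p_m$ and $S$ has spectrum inside the closure of $\rho\Omega$, one has $p_m(S) \to f(S)$ in norm, so the polynomials $p_m$ very nearly satisfy the von Neumann inequality of Theorem~\ref{T:main}(3) on the \emph{strictly} contractive tuples. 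Applying the polynomial theorem—or more precisely, applying the finite-set factorization from the ``(3) implies (1)'' proof of Theorem~\ref{T:main}—yields Agler–Nevanlinna factorizations for each $p_m$ (or for $f_r$), and I would then pass to the limit using the same compactness/Kurosh apparatus developed there, with the uniform bound~\eqref{E:key_closedness_estimate} controlling the factoring functions $F$.

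The main obstacle I anticipate is the gap between ``contractive'' and ``strictly contractive'' tuples. The von Neumann hypothesis in item (3) quantifies only over $S$ with $\|\sigma_S\|_{cb} < 1$, whereas the factorization~\eqref{E:factorization_redux} must hold on all of $\Omega$. I would bridge this by working with the dilated functions $f_r(z) := f(rz)$ for $0 < r < 1$: each $f_r$ is holomorphic on a neighborhood of $\overline{\Omega}$, hence approximable by polynomials uniformly on $\overline\Omega$, and the inequality $\|f_r(S)\| \le 1$ for \emph{all} completely contractive (not merely strictly contractive) $S$ follows because evaluating $f_r$ at $S$ equals evaluating $f$ at the strictly contractive tuple $rS$. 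This converts the strict hypothesis into an honest polynomial-type von Neumann inequality for $f_r$, lets me invoke Theorem~\ref{T:main} to factor each $f_r$, and finally I would let $r \to 1$, again using the normal-families/Kurosh compactness argument to extract a limiting factorization for $f$ itself. Verifying that the completely contractive maps $\sigma_r$ and the kernels $F_r$ converge appropriately, and that the limit $\sigma$ remains completely contractive for $E$, is the delicate bookkeeping that the strictness is designed to make tractable.
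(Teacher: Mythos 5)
Your treatment of (1)$\Rightarrow$(2) and (2)$\Rightarrow$(3) matches the paper (which declares these ``essentially unchanged''), and your key structural idea for (3)$\Rightarrow$(1) --- dilating to $f_r(z):=f(rz)$ so that the strict hypothesis yields the non-strict inequality $\|f_r(S)\|\leq 1$ for \emph{every} admissible tuple $S$ --- is exactly the paper's move. The gap is in how you pass from that inequality to a factorization of $f_r$. You propose to approximate $f_r$ by polynomials \emph{uniformly on $\overline{\Omega}$} and then ``invoke Theorem~\ref{T:main}.'' But Theorem~\ref{T:main} applies only to polynomials, and its hypothesis (3) demands $\|q(S)\|\leq 1$ over \emph{all} admissible $S$; sup-norm closeness of a polynomial $q$ to $f_r$ gives no control of $\|q(S)-f_r(S)\|$ uniformly over such $S$. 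Indeed, the discrepancy between the sup norm and the $UC$ norms is the central phenomenon of this paper: they are inequivalent in the Drury--Arveson case, and $\|p\|_{MIN(\mathbb D^3)}>\|p\|_\infty$ by Kaijser--Varopoulos. So this step, as stated, fails. Your earlier variant --- that the Taylor partial sums $p_m$ ``very nearly'' satisfy the von Neumann inequality on strictly contractive tuples --- has the same defect: $p_m(S)\to f(S)$ holds for each fixed strict $S$, and uniformly on $\{\|\sigma_S\|_{cb}\leq\rho\}$ for each fixed $\rho<1$, but not uniformly as $\rho\to 1$, and in any case Theorem~\ref{T:main}(3) quantifies over non-strict tuples as well.

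The paper avoids approximation altogether: it argues contrapositively, fixing a finite set $\Lambda$ and the closed cone $\mathcal C$ from the polynomial proof. If $[I_N-f(\lambda_i)f(\lambda_j)^*]$ lies outside $\mathcal C$, then closedness (Claim 1 in the proof of Theorem~\ref{T:main}) provides $r<1$ with $[I_N-f_r(\lambda_i)f_r(\lambda_j)^*]$ still outside $\mathcal C$; the separation/GNS argument --- which uses only the finitely many values $f_r(\lambda_i)$ and produces a commuting tuple $S$ on a \emph{finite-dimensional} space --- then gives $\|f(rS)\|=\|f_r(S)\|>1$, and since $rS$ is strictly completely contractive for $E^*$ this contradicts (3); Kurosh's theorem globalizes the finite-set factorizations exactly as before. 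If you want to keep your approximation scheme, it can be repaired, but you need one genuinely new estimate: for any admissible $S$, the map $\lambda\mapsto f(\lambda S)$ is a $B(K)$-valued holomorphic function on the unit disk bounded by $1$ (by hypothesis (3)), so the Cauchy estimates give $\|P_k(S)\|\leq 1$ for every homogeneous part $P_k$ of $f$, \emph{uniformly over all admissible $S$}. Then the partial sums $\sum_{k\leq m}r^kP_k$ converge to $f_r$ in the $UC(E^*)$ norm with error at most $\sum_{k>m}r^k$, and only with this uniform control may Theorem~\ref{T:main} legitimately be applied to (normalizations of) these polynomials, after which the cone-closedness/Kurosh apparatus yields the factorization of $f_r$ and, letting $r\to 1$, of $f$.
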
 
\begin{proof}[Proof (sketch)]  The ``1 implies 2'' and ``2 implies 3'' proofs are essentially unchanged.  For ``3 implies 1,'' fix the finite set $\Lambda$ and the cone $\mathcal C$ as in the original proof.  Since $\mathcal C$ is closed and $I_N-f(\lambda_i)f(\lambda_j)^*$ is assumed to be outside of $\mathcal C$, there exists $0<r<1$ so that $I_N-f_r(\lambda_i)f_r(\lambda_j)^*$ is still outside of $\mathcal C$, where $f_r(z):=f(rz)$.  Now continue the proof as before with $f_r$ in place of $f$.  The GNS construction produces, as in the original proof, operators $S_i$ so that $\sigma_S$ is completely contractive for $E^*$.  Finishing the proof shows that 
\begin{align*}
\langle (I-f(rS)f(rS)^*) G,G\rangle_{\mathcal H^\prime} &=\langle
(I-f_r(S)f_r(S)^*) G,G\rangle_{\mathcal H^\prime} \\ &=L(I_N -
f_r(\lambda_i)f_r(\lambda_j))\\ &<0.
\end{align*} 
The operators $rS_i$ thus give a strictly completely contractive map
for $E^*$ and the desired contradiction.
\end{proof}

As is now well-understood, the equivalences in
Theorem~\ref{T:main_redux} also give rise to a Nevanlinna-Pick
interpolation theorem for the Banach algebra of holomorphic functions
on $\Omega$ with the norm whose unit ball is characterized by
Theorem~\ref{T:main_redux}.  (Extending the notation of the previous
section, we will call this algebra $UC^\infty(E)$).  We state here
only the most elementary scalar version; by well-known techniques the
result may be extended to cover matrix-valued interpolation.

\begin{thm}\label{T:NP}  Given points $\lambda_1, \dots \lambda_N$ in $\Omega$ and scalars $w_1, \dots w_N$, there exists a function $f\in UC^\infty(E)$ satisfying $f(\lambda_j)=w_j$ for all $j=1, \dots N$ if and only if there exist matrices $T_1, \dots T_n$ such that $\sigma_T$ is completely contractive for $E$ and vectors $v_1, \dots v_N$ such that
\begin{equation}\label{E:NP}
1-w_i\overline{w_j} = v_i [I-\sum_{k,l=1}^n \lambda_i^k\overline{\lambda_j^l}T_k T_l^*]v_j^*
\end{equation}   
\end{thm}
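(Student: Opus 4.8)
The plan is to reduce the interpolation problem to the solvability of a transfer function realization and then invoke Theorem~\ref{T:main_redux}. The natural strategy for the ``only if'' direction is to start with an interpolating function $f \in UC^\infty(E)$ and apply the Agler-Nevanlinna factorization of Theorem~\ref{T:main_redux}, item (1), to produce a completely contractive $\sigma:V\to B(K)$ and an analytic $F:\Omega\to B(K,\mathbb C^N)$ (here the target is scalar, so $N=1$) satisfying
\begin{equation*}
1 - f(z)\overline{f(w)} = F(z)\bigl[I_K - \sigma(z)\sigma(w)^*\bigr]F(w)^*.
\end{equation*}
Evaluating this identity at the interpolation nodes $z=\lambda_i$, $w=\lambda_j$ and using $f(\lambda_i)=w_i$ immediately gives an expression of the form \eqref{E:NP}, except that $\sigma$ acts on an infinite-dimensional $K$ and is a general completely contractive map rather than one induced by an $n$-tuple of matrices. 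The work in this direction is therefore to pass from the operator-valued $\sigma$ to matrices $T_1,\dots,T_n$. Writing $\sigma(z)=\sum_k z_k T_k'$, one sees that $\sigma$ completely contractive for $E$ is exactly the statement that the $n$-tuple $(T_1',\dots,T_n')$ induces a completely contractive $\sigma$ for $E$; the vectors $v_i := F(\lambda_i)^*$ (suitably interpreted) then yield \eqref{E:NP}. To replace $K$ by a finite-dimensional space and $T_k'$ by honest matrices, I would compress to the finite-dimensional subspace spanned by the relevant vectors $\{\sigma(\lambda_i)^*v_i, v_i\}$, exactly as in the dimension-reduction observation following \eqref{E:conedef} in the proof of the cone argument; complete contractivity is preserved under such compression by the estimate used in Proposition~\ref{P:dual_op_space}.

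For the ``if'' direction, I would assume the Pick-type data \eqref{E:NP} is solvable with matrices $T_k$ and vectors $v_i$, and construct an interpolating $f$. The key step is to recognize \eqref{E:NP} as the statement that a certain finite Pick matrix lies in the cone $\mathcal C$ of \eqref{E:conedef} (for the finite set $\Lambda=\{\lambda_1,\dots,\lambda_N\}$ and the specified boundary values). Concretely, setting $\psi=\sigma_T$ and $F(\lambda_i)=v_i$, equation \eqref{E:NP} says precisely that the matrix $[1-w_i\overline{w_j}]$ admits a factorization of the form \eqref{E:conedef}, so $[1-w_i\overline{w_j}]\in\mathcal C$. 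By the separating-functional / cone duality established in the ``3 implies 1'' argument, membership in $\mathcal C$ is what produces a genuine factorization on the finite set $\Lambda$; from there the Kurosh-theorem patching argument extends this to a factorization on all of $\Omega$, and Theorem~\ref{T:main_redux} delivers a function $f\in UC^\infty(E)$ whose factorization agrees with the data at the nodes, hence $f(\lambda_j)=w_j$.

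I expect the main obstacle to be the precise bookkeeping in the finite-dimensional reduction for the ``only if'' direction, and more substantively, ensuring that membership of the Pick matrix in $\mathcal C$ is genuinely equivalent to—not merely implied by—\eqref{E:NP}. The subtlety is that the interpolation conditions only constrain $f$ at finitely many points, so one must check that extending a factorization valid at the nodes to a function holomorphic on all of $\Omega$ does not disturb the prescribed values; this is handled by the cone/Kurosh machinery, but it requires that the factorizing data at the nodes be compatible with a global completely contractive $\sigma$. The standard resolution, which I would follow, is to observe that \eqref{E:NP} places the relevant Pick matrix inside the closed cone $\mathcal C$, invoke Lemma~\ref{L:mad-sam-main} together with the finite-set factorization argument, and then apply Theorem~\ref{T:main_redux} directly rather than re-deriving the transfer-function realization by hand. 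The matrix-valued generalization alluded to in the statement then follows by the usual device of replacing scalars $w_j$ with matrix data and carrying the same cone argument through with $M_N$-valued $F$.
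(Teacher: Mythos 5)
Your ``only if'' direction is essentially the paper's: restrict the Agler--Nevanlinna factorization of Theorem~\ref{T:main_redux}(1) to the nodes, taking $T_k=\sigma(e_k)$ and $v_i=F(\lambda_i)$; your compression to the span of $\{v_i^*,\ \sigma(\lambda_i)^*v_i^*\}$ correctly supplies the finite matrices $T_k$ (of size at most $2N$) that the statement demands and that the paper only remarks on after the proof.

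The converse, however, has a genuine gap. You propose to read \eqref{E:NP} as membership of the Pick matrix in the cone $\mathcal C$, and then to let ``the cone/Kurosh machinery,'' Lemma~\ref{L:mad-sam-main}, and Theorem~\ref{T:main_redux} produce the interpolant. But every one of those tools presupposes a function already defined on all of $\Omega$: the inverse system $\{\Phi_{\Lambda'}\}$ in the Kurosh argument requires, for \emph{every} finite $\Lambda'\subset\Omega$, a nonempty compact set of factorizations of $1-f(z)\overline{f(w)}$ on $\Lambda'$, and with only interpolation data there is nothing to factor at points outside $\{\lambda_1,\dots,\lambda_N\}$, so the inverse system does not exist; likewise Lemma~\ref{L:mad-sam-main} needs a positive semidefinite $\Gamma$ defined on all of $\Omega\times\Omega$, and all three conditions of Theorem~\ref{T:main_redux} are statements about a globally defined holomorphic $f$ --- invoking that theorem to construct $f$ is circular. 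The step you explicitly decline to carry out, ``re-deriving the transfer-function realization by hand,'' is precisely the missing extension mechanism, and it is how the paper argues: set $\sigma=\sigma_T$, run the lurking isometry on the finite identity \eqref{E:NP} (rearranged as $1+v_i\sigma(\lambda_i)\sigma(\lambda_j)^*v_j^* = w_i\overline{w_j}+v_iv_j^*$) to obtain a unitary colligation satisfying $w_j=D+C(I-\sigma(\lambda_j)A)^{-1}\sigma(\lambda_j)B$, and then observe that the formula $f(z):=D+C(I-\sigma(z)A)^{-1}\sigma(z)B$ \emph{is} the globally defined holomorphic function you need: it interpolates the data by construction, and it lies in $UC^\infty(E)$ by the implication $(2)\implies(3)$ of Theorem~\ref{T:main_redux}. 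Without this step your argument never produces a function on $\Omega$ at all.
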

\begin{proof}  If $f\in UC^\infty(E)$ and $f(\lambda_j)=w_j$, then (\ref{E:NP}) is simply the restriction of (\ref{E:factorization_redux}) to the points $\lambda_1, \dots \lambda_N$, with $T_k=\sigma(e_k)$.  Conversely, if (\ref{E:NP}) holds, we set $\sigma=\sigma_T$ and run the lurking isometry argument; this produces a unitary colligation such that
\begin{equation}\label{W:NP_transfer}
w_j =D+C(I-\sigma(\lambda_j)A)^{-1}\sigma(\lambda_j)B.
\end{equation}
But this transfer function realization extends to define a function $f$ in all of $\Omega$, and by Theorem~\ref{T:main_redux} this $f$ lies in $UC^\infty(E)$.  
\end{proof}

\bibliographystyle{plain} 
\bibliography{optestfns29sep}

\begin{thebibliography}{10}

\bibitem{agmcc}
Jim Agler and John~E. McCarthy.
\newblock {\em Pick interpolation and {H}ilbert function spaces}, volume~44 of
  {\em Graduate Studies in Mathematics}.
\newblock American Mathematical Society, Providence, RI, 2002.

\bibitem{at}
C.-G. Ambrozie and D.~Timotin.
\newblock A von {N}eumann type inequality for certain domains in {$\bold C\sp
  n$}.
\newblock {\em Proc. Amer. Math. Soc.}, 131(3):859--869 (electronic), 2003.

\bibitem{arv}
William Arveson.
\newblock Subalgebras of {$C^*$}-algebras. {III}. {M}ultivariable operator
  theory.
\newblock {\em Acta Math.}, 181(2):159--228, 1998.

\bibitem{bb}
Joseph~A. Ball and Vladimir Bolotnikov.
\newblock Realization and interpolation for {S}chur-{A}gler-class functions on
  domains with matrix polynomial defining function in {$\Bbb C\sp n$}.
\newblock {\em J. Funct. Anal.}, 213(1):45--87, 2004.

\bibitem{btv}
Joseph~A. Ball, Tavan~T. Trent, and Victor Vinnikov.
\newblock Interpolation and commutant lifting for multipliers on reproducing
  kernel {H}ilbert spaces.
\newblock In {\em Operator theory and analysis ({A}msterdam, 1997)}, volume 122
  of {\em Oper. Theory Adv. Appl.}, pages 89--138. Birkh\"auser, Basel, 2001.

\bibitem{crabb-davie}
M.~J. Crabb and A.~M. Davie.
\newblock von {N}eumann's inequality for {H}ilbert space operators.
\newblock {\em Bull. London Math. Soc.}, 7:49--50, 1975.

\bibitem{dmm}
Michael~A. Dritschel, Stefania Marcantognini, and Scott McCullough.
\newblock Interpolation in semigroupoid algebras.
\newblock {\em J. Reine Angew. Math.}, 606:1--40, 2007.

\bibitem{drury}
S.~W. Drury.
\newblock A generalization of von {N}eumann's inequality to the complex ball.
\newblock {\em Proc. Amer. Math. Soc.}, 68(3):300--304, 1978.

\bibitem{mittal-paulsen}
Meghna Mittal and Vern~I. Paulsen.
\newblock Operator algebras of functions.
\newblock {\em J. Funct. Anal.}, 258(9):3195--3225, 2010.

\bibitem{paulsen}
Vern Paulsen.
\newblock {\em Completely bounded maps and operator algebras}, volume~78 of
  {\em Cambridge Studies in Advanced Mathematics}.
\newblock Cambridge University Press, Cambridge, 2002.

\bibitem{pisier}
Gilles Pisier.
\newblock {\em Introduction to operator space theory}, volume 294 of {\em
  London Mathematical Society Lecture Note Series}.
\newblock Cambridge University Press, Cambridge, 2003.

\bibitem{Rud}
Walter Rudin.
\newblock {\em Function theory in the unit ball of {${\bf C}^{n}$}}, volume 241
  of {\em Grundlehren der Mathematischen Wissenschaften [Fundamental Principles
  of Mathematical Science]}.
\newblock Springer-Verlag, New York, 1980.

\bibitem{var}
N.~Th. Varopoulos.
\newblock On an inequality of von {N}eumann and an application of the metric
  theory of tensor products to operators theory.
\newblock {\em J. Functional Analysis}, 16:83--100, 1974.

\end{thebibliography}

\end{document}